\title{Thermodynamic Formalism for
Iterated Function Systems with Weights}
\author{
  L. Cioletti
  \\[-0.1cm]
  \footnotesize Departamento de Matem\'atica - UnB
  \\[-0.1cm]
  \footnotesize 70910-900, Bras\'ilia, Brazil
  \\[-0.1cm]
  \footnotesize\texttt{cioletti@mat.unb.br}
  \and
  \large
  Elismar R. Oliveira
  \\[-0.1cm]
  \footnotesize Departamento de Matem\'atica - UFRGS
  \\[-0.1cm]
  \footnotesize 91509-900, Porto Alegre, Brazil
  \\[-0.1cm]
  \footnotesize\texttt{oliveira.elismar@gmail.com}
}
\date{\small\today}
\newtheorem{theorem}{Theorem}
\newtheorem{corollary}{Corollary}
\newtheorem{lemma}{Lemma}
\newtheorem{proposition}{Proposition}
\theoremstyle{definition}
\newtheorem{definition}[theorem]{Definition}
\newtheorem{remark}[theorem]{Remark}
\newtheorem{example}[theorem]{Example}
\begin{document}
    \makeatletter
    \def\blfootnote{\gdef\@thefnmark{}\@footnotetext}
    \let\@fnsymbol\@roman
    \makeatother

\maketitle

\begin{abstract}
This paper introduces an intrinsic theory of
Thermodynamic Formalism for Iterated Functions Systems
with general positive continuous weights (IFSw).
We study the spectral properties of the
Transfer and Markov operators and one of our first
results is the proof of the existence of at least one eigenprobability
for the Markov operator associated to a positive eigenvalue.
Sufficient conditions are provided for this eingenvalue to be
the spectral radius of the transfer operator and we also prove in this
general setting that positive eigenfunctions of
the transfer operator are always associated to its spectral radius.

We introduce variational formulations for the topological entropy
of holonomic measures and the topological pressure of IFSw's
with weights given by a potential.
A definition of equilibrium state is then natural and we prove
its existence for any continuous potential.
We show, in this setting, a uniqueness result for the equilibrium state
requiring only the G\^ateaux differentiability
of the pressure functional.
We  also recover the classical formula relating the  powers of the transfer operator and the
topological pressure and establish its uniform convergence. In the last section we present some
examples and show that the results obtained can be viewed as
a generalization of several classical results in Thermodynamic Formalism
for ordinary dynamical systems.
\end{abstract}

\blfootnote{\textup{2010} \textit{Mathematics Subject Classification}: 37D35, 37B40, 28Dxx.}
\blfootnote{\textit{Keywords}: Iterated function system, Thermodynamic Formalism,
Ergodic Theory, transfer operator, entropy, pressure, equilibrium states.}

\section{Introduction}

Thermodynamic Formalism has its roots in the seminal paper
by David Ruelle \cite{MR0234697}, where the transfer operator was introduced.
In that work the transfer operator was used to prove the uniqueness of the
Gibbs measures for some long-range Statistical Mechanics models in the
one-dimensional lattice.
In \cite{MR0399421},  Yakov Sinai established
a deep connection between one-dimensional Statistical Mechanics
and Hyperbolic Dynamical Systems on compact manifolds by using what
he called ``Markov Partition''.
Since then the Thermodynamic Formalism has attracted a
lot of attention in the Dynamical Systems
community because of its numerous important applications.
The transfer operator is a major tool in this theory and
has remarkable applications to topological dynamics, meromorphy
of dynamical zeta functions and multifractal analysis, just to name a few,
see \cite{MR3024807,MR2423393,MR556580,MR1143171,MR742227,MR1085356,MR1920859,MR0399421}
and references therein.

In the last decades, an interest in
bringing the techniques developed in Thermodynamic Formalism
for classical dynamical systems to the theory of Iterated Function Systems (IFS) has surfaced, but although Thermodynamic Formalism in the
context of IFS has been discussed by many authors the set of results obtained
thus far lack an uniform approach.
The best effort has been exerted via
Multifractal analysis and Ergodic optimization,
however, in the majority of the papers surrounding this matter,
hypothesis on the IFS self-maps and the weights such as
contractiveness, open set condition (OSC), non-overlaping,
conformality and  H\"older or Lipschtz regularity are required.

A first version of the
Ruele-Perron-Frobenius theorem for contractive IFS, via shift conjugation, was
introduced in \cite{MR1669203} --- the authors study the Hausdorff dimension
of the Gibbs measure under conformal and OSC conditions,
but several other important tools of Thermodynamic Formalism
were not considered.
Their proofs rely strongly on  contractiveness to
built a conjugation between the attractor and a code space, which is
the usual shift, bringing back the classic results of the
Ruelle-Perron–Frobenius theory.

Subsequently, \cite{MR1786722} considered the Topological Pressure,
Perron–Frobenius type operators, conformal measures and the
Hausdorff dimension of the limit set, making use of hypothesis such
as conformality, OCS and bounded distortion property (BDP).

Two years later, in \cite{MR1912488}, a developed model of
Thermodynamic Formalism arises and results for 
Conformal IFS and on Multifractal Analysis were obtained. 
The authors consider $X \subset \mathbb{R}^{d}$, a countable index set 
$I$ and a collection of uniform contractive injections $\tau_i: X \to X$.
They define a conformal infinite IFS $(X, (\tau_i)_{i \in I})$ and  
study it by using a coding map $\pi: I^{\infty} \to X$, 
given by 
$\displaystyle \pi(w)=\cap_{n\geq 0} \tau_{w_{0}}\cdots\tau_{w_{n-1}}(X)$. 
The contraction hypotheses on that work
are paramount to identify
the state space $X$ with a suitable symbolic space. 
As usual, OSC, conformality, BDP and other properties are assumed.
Their main tool is the topological pressure, defined as
$P(t)=\lim_{n \to \infty} \frac{1}{n} \, \ln \sum \| \cdot\|^t, $
where the sum is taken over the words of length
$n$ in $I^{\infty}$ and $\| \cdot\| $ is the norm of the 
derivative of the IFS maps. 
Under these conditions the remarkable formula
$HD(J\equiv \pi(I^{\infty}))=\inf\{t\geq 0 | P(t)\leq 0\}$
is obtained. They also deduce a variational formulation for 
the topological pressure for a potential,  which is
\[P(F)=\sup_{\mu} \left\{ h_{\mu} (\sigma) + \int f(w) d\mu(w) \right\},\]
where $h_{\mu} (\sigma)$ is the usual entropy for $\mu$ as a
shift invariant measure and $f$ is an amalgamated 
potential in $I^{\infty}$: $f(w)=f_{w_0}(\pi(\sigma(w))$
induced by $F=\{f_i: X \to \mathbb{C}\}_{i\in I}$ 
a family H\"older exponential weight functions. 
The generalization of the classical formula for the pressure is obtained in this 
context, i.e.
\[P(F)=\inf_{n \geq 1} \left\{ \frac{1}{n} \, \ln \sum_{|w|=n} \| Z_{n}(F)\| \right\},\]
where $Z_{n}(F)$ is suitable partition function (see \cite{MR1912488} for details). 
From this, equilibrium states are studied.

Some of the main results of this paper are inspired by three recent works
\cite{MR3377291,MR2097242,CER-2016}.
In \cite{MR3377291} Thermodynamic Formalism is developed for
H\"older potentials with the dynamics given by the left-shift mapping
acting on an infinite cartesian product of compact metric spaces,
which can be seen as an infinite contractive IFS.
In \cite{MR2097242} the author developed a
Thermodynamical Formalism for finite iterated function system, including
self-affine and self-conformal maps. The topological pressure is defined
for cylinder functions, but the measures are defined on an ad-hoc
symbolic space and not on the state space itself,
and contractiveness is assumed on the IFS to project it onto the code space.
A strong control of overlapping cannot be dismissed.
One of our main results Theorem \ref{prop-exist-auto-medida}
is inspired by an analogous result in \cite{CER-2016}, for symbolic
dynamics on uncoutable alphabets.

In order to construct  a truly intrinsic
theory of Thermodynamic Formalism for IFS, conditions such as
contractiveness or conformality should be avoided
and, ideally, only the continuity of the self-maps ought to be requested.
To have a theory where phase transition phenomenon can manifest,
continuity (or a less restrictive condition) on the weights
is a very natural condition to be considered.

Aiming to have an intrinsic theory, we extended
some of the results obtained in \cite{MR2461833}.
The main achievement of that paper was to
introduce the idea of holonomic probability measure as the
natural replacement of invariance for an IFS.
The authors considered a variational notion of entropy and topological pressure
in terms of the transfer operator,
leading to the existence of a very natural variational principle.
However, abundant information on the spectral properties of the
transfer operator was required and strong hypothesis on the IFS
were needed. Here we show how to avoid such requirements
for finite continuous IFS with positive continuous weights, and
 remark that no coding spaces are used in our work and
only intrinsic structures of the IFS are necessary to
define and study equilibrium states as probability measures
on the state space.

This work adds to the remarkable
effort that has been done by several authors in the last few years
to produce more general results for IFS, regarding fundamental results such as the
existence of attractors and attractive measures for the Markov operator.
See the recent works \cite{MR3623766} for weakly hyperbolic IFS, \cite{2016arXiv160706165M}
for $P$-weakly hyperbolic IFS and \cite{2016arXiv160502752M} for non-hyperbolic IFS.

The paper is organized as follows.
In Section~\ref{IFS operators} we introduce the notion of IFS
with weights, generalizing the basic setting where the idea of potential
is present. We also study the associated transfer operator and present a
characterization of its spectral radius.
In Section \ref{markov op proper}, we study the
Markov operator from an abstract point of view and prove a very
general result regarding the existence of eigenmeasures
associated to a positive eigenvalue.
The holonomic measures are introduced in
Section~\ref{holon measures} and their existence,
disintegration and the compatibility with IFS are carefully discussed.
In the subsequent section, the notions of entropy for a holonomic probability
measure and topological pressure for a continuous potential are introduced and
we show that, for any continuous potential and IFS, there is always an equilibrium state.
We also prove that the G\^ateaux differentiability of the topological pressure
is linked to the uniqueness problem of equilibrium states.
By imposing extra conditions
on the IFS and the weights, we provide a constructive approach
to the existence of an equilibrium state by using the maximal eigendata
of the transfer operator in Section~\ref{applic eq state}.

\section{IFS with weights - Transfer and Markov\break  Operators}\label{IFS operators}

In this section we set up the basic notation and
present a fundamental result about the eigenspace associated to
the maximal eigenvalue (or spectral radius) of transfer operator.

\bigskip

Throughout this paper, $X$, $Y$ and $\Omega$ are general compact metric spaces.
The Borel $\sigma$-algebra of $X$ is denoted by $\mathscr{B}(X)$
and similar notation is used for $Y$ and $\Omega$. The Banach
space of all real continuous functions equipped with supremum norm
is denoted by $C(X,\mathbb{R})$.
Its topological dual, as usual, is identified with $\mathscr{M}_{s}(X)$,
the space of all finite Borel signed measures endowed with
total variation norm. We use the notation $\mathscr{M}_{1}(X)$
for the set of all Borel probability measures over $X$
supplied with the weak-$*$ topology. Since we are assuming
that $X$ is compact metric space then we have that the topological
space $\mathscr{M}_{1}(X)$ is compact and metrizable.

An Iterated Function System with weights, IFSw for short,  is an ordered  triple
$\mathcal{R}_{  q}=(X,   \tau,   q)$, where
$  \tau \equiv \{\tau_0, ..., \tau_{n-1}\}$
is a collection of $n$ continuous functions from $X$ to itself and
$  q \equiv \{q_0,..., q_{n-1}\} $ is a finite collection of continuous
weights from $X$ to $[0,\infty)$.
An IFSw $\mathcal{R}_{  q}$ is said to be {\bf normalized} if for all $x\in X$
we have $\sum_{i=0}^{n-1} q_i(x)=1$.
A normalized IFSw with nonnegative weight is sometimes called
an IFS with place dependent probabilities (IFSpdp).

\begin{definition} Let $\mathcal{R}_{  q}=(X,   \tau,   q)$ be an IFSw.
The Transfer and Markov operators associated to $\mathcal{R}_{  q}$ are defined as follows:
\begin{enumerate}
  \item
  The \textbf{Transfer Operator}
  $B_{  q}: C(X , \mathbb{R}) \to C(X, \mathbb{R})$
  is given by
  \[
  B_{  q}(f) (x)= \sum_{i=0}^{n-1} q_{i}(x) f(\tau_{j}(x)),
  \qquad \forall x \in X.
  \]

  \item
  The \textbf{Markov Operator}
  $\mathcal{L}_{  q} : \mathscr{M}_{s}(X) \to \mathscr{M}_s(X)$
  is the unique bounded linear operator satisfying
  \[
  \int_{X} f\, d[\mathcal{L}_{  q} (\mu)]
  =
  \int_{X } B_{  q}(f)\,  d\mu,
  \]
  for all $\mu \in \mathscr{M}_s(X)$ and $f\in C(X,\mathbb{R})$.
\end{enumerate}
\end{definition}

Note that the isomorphism $C(X,\mathbb{R})^*\backsimeq \mathscr{M}_{s}(X)$
allow us to look at the Markov operator $\mathcal{L}_{  q}$
as the Banach transpose of $B_{  q}$, i.e., $B_{  q}^*=\mathcal{L}_{  q}$.

\bigskip

\begin{proposition} \label{prop-powers-1}
	Let $\mathcal{R}_{  q}=(X,   \tau,   q)$ be a continuous IFSw.
	Then for the $N$-th iteration of $B_q$ we have
	\[
	B_{  q}^N(1) (x)
	=
	\sum_{w_{0} \ldots w_{N-1} =0}^{N-1}
	P_{x}^{\hspace*{0.03cm}  q}(w_{0},\ldots, w_{N-1})
	\]
	where,
	$P_{x}^{\hspace*{0.03cm}  q}(w_0,\ldots, w_{N-1})
	\equiv \prod_{j=0}^{N-1}q_{w_j}(x_j)$,
	$x_0=x$ and $x_{j+1}=\tau_{w_j}x_j$.
\end{proposition}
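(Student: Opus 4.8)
The plan is to prove the formula by induction on $N$. The case $N=1$ is immediate from the definition of $B_q$: indeed $B_q(1)(x) = \sum_{w_0=0}^{n-1} q_{w_0}(x) = \sum_{w_0} P_x^{\,q}(w_0)$, since $P_x^{\,q}(w_0) = q_{w_0}(x_0)$ with $x_0 = x$. So the base case just unwinds the notation.

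For the inductive step, I would assume the formula holds for $N$ and compute $B_q^{N+1}(1)(x) = B_q\big(B_q^{N}(1)\big)(x)$. Applying the definition of $B_q$ to the function $g = B_q^N(1)$ gives
\[
B_q^{N+1}(1)(x) = \sum_{w_0=0}^{n-1} q_{w_0}(x)\, g(\tau_{w_0}(x)).
\]
The crux is to substitute the induction hypothesis for $g$ evaluated at the point $\tau_{w_0}(x)$ rather than at $x$. Writing $y = \tau_{w_0}(x)$ and applying the hypothesis, $g(y) = \sum_{w_1 \ldots w_N} \prod_{j=1}^{N} q_{w_j}(y_{j-1})$, where the orbit is started at $y_0 = y = \tau_{w_0}(x)$ and propagated by $y_{k+1} = \tau_{w_k}(y_k)$. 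The key bookkeeping observation is that this shifted orbit coincides with the tail of the orbit for the length-$(N+1)$ word: if we set $x_0 = x$ and $x_{j+1} = \tau_{w_j}(x_j)$, then $x_1 = \tau_{w_0}(x) = y_0$, and inductively $x_{j+1} = y_j$, so the factors line up exactly as $\prod_{j=0}^{N} q_{w_j}(x_j)$.

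I would then push the factor $q_{w_0}(x) = q_{w_0}(x_0)$ inside the sum over the remaining indices $w_1, \ldots, w_N$ and merge the sum over $w_0$ with the sum over the tail, obtaining a single sum over all words $(w_0, \ldots, w_N)$ of length $N+1$ of the product $\prod_{j=0}^{N} q_{w_j}(x_j)$, which is precisely $\sum P_x^{\,q}(w_0, \ldots, w_N)$. This completes the induction.

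The main obstacle is purely notational rather than mathematical: the essential care lies in correctly re-indexing the orbit under the shift, making sure that the starting point of the inner orbit is $\tau_{w_0}(x)$ and verifying that this glues seamlessly with the outer point $x$ to reconstruct the full orbit $x_0, x_1, \ldots, x_N$. Once the alignment $x_{j+1} = y_j$ is established, the collapse of the double sum into a single sum over length-$(N+1)$ words is routine. No spectral or topological input is needed here—only the recursive definition of the transfer operator and the continuity of the $q_i$ (which guarantees that each $B_q^N(1)$ is a genuine element of $C(X,\mathbb{R})$, so the iteration is well-defined).
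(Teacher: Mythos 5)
Your proof is correct and takes exactly the approach the paper intends: the paper's own proof is just the one-line remark that the formula follows by ``a formal induction on $N$,'' and your argument carries out that induction in full, with the re-indexing $x_{j+1}=y_j$ being precisely the bookkeeping the paper leaves implicit. Note only that the upper limit of the sum in the statement should read $n-1$ (the number of maps), as you correctly use, rather than the $N-1$ appearing in the paper's display.
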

\begin{proof}
This expression can be obtained by proceeding a formal induction on $N$.
\end{proof}

In the sequel we prove the main result of this section
which is Lemma \ref{powers description}.
Roughly speaking it states that any possible positive eigenfunctions
of the transfer operator $B_{ {q}}$ lives in the eigenspace associated to
the spectral radius of this operator acting on $C(X,\mathbb{R})$.
Beyond this nice application the lemma will be used, in the last section,
to derive an expression for the topological pressure
(see Definition \ref{Pressure}) involving only
the transfer operator and its powers.

\begin{lemma} \label{powers description}
Let $\mathcal{R}_{  q}=(X,   \tau,   q)$ be a continuous IFSw
and suppose that there are a positive number $\rho$ and
a strictly positive continuous function
$h: X \to \mathbb{R}$ such that  $B_{  q}(h)=\rho h$.
Then the following limit exits
\begin{align}\label{1sobreNlnBN}
\lim_{N \to \infty}
\frac{1}{N}
\ln\left(B_{  q}^N(1) (x) \right)
=
\ln \rho( B_{ {q}} )
\end{align}
the convergence is uniform in $x$ and
$\rho=\rho(B_{  q})$, the spectral radius of $B_{  q}$ acting on $C(X,\mathbb{R})$.
\end{lemma}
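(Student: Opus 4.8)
The plan is to exploit the positivity of the transfer operator together with two-sided bounds on the given eigenfunction. Since $h$ is strictly positive and continuous on the compact space $X$, there are constants $0 < m \le M < \infty$ with $m \le h(x) \le M$ for every $x \in X$. Because each weight $q_i$ is nonnegative, $B_q$ is a positive operator, hence monotone, and so is every iterate $B_q^N$. Iterating the eigenvalue equation gives $B_q^N(h) = \rho^N h$. Applying the monotone operator $B_q^N$ to the pointwise inequalities $m \le h \le M$ and using linearity then yields the sandwich
\[
\frac{\rho^N h(x)}{M} \le B_q^N(1)(x) \le \frac{\rho^N h(x)}{m}, \qquad \forall x \in X,\ \forall N \ge 1.
\]

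Next I would take logarithms and divide by $N$, obtaining for each $x$ the estimate
\[
\ln\rho + \frac{\ln h(x) - \ln M}{N} \le \frac{1}{N}\ln\big(B_q^N(1)(x)\big) \le \ln\rho + \frac{\ln h(x) - \ln m}{N}.
\]
Since $m \le h(x) \le M$, both correction terms are bounded in absolute value by $N^{-1}(\ln M - \ln m)$, a quantity independent of $x$; letting $N \to \infty$ they vanish uniformly. This already proves that $\frac{1}{N}\ln(B_q^N(1)(x)) \to \ln\rho$ uniformly in $x$, establishing \eqref{1sobreNlnBN} up to the identification $\rho = \rho(B_q)$.

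To identify $\rho$ with the spectral radius I would invoke Gelfand's formula $\rho(B_q) = \lim_{N\to\infty}\|B_q^N\|^{1/N}$ together with the standard fact that a positive operator on $C(X,\mathbb{R})$ attains its norm at the constant function $1$: if $|f| \le 1$ then $|B_q^N f| \le B_q^N(1)$ pointwise, so $\|B_q^N\| = \|B_q^N(1)\|_\infty$. From the sandwich above one reads off $\frac{m}{M}\,\rho^N \le \|B_q^N(1)\|_\infty \le \frac{M}{m}\,\rho^N$; taking $N$-th roots, the factors $(m/M)^{1/N}$ and $(M/m)^{1/N}$ tend to $1$, whence $\rho(B_q) = \rho$. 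Substituting back gives the claimed value $\ln\rho = \ln\rho(B_q)$.

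The argument is essentially self-contained, and the only point demanding care is the uniformity of the convergence, which holds precisely because the sandwich constants $m$ and $M$ do not depend on $x$. The conceptual key, and the single real obstacle, is recognizing that monotonicity of the positive operator promotes a two-sided bound on one eigenfunction into a two-sided bound on $B_q^N(1)$ that is tight on the exponential scale; once this observation is in place, both the uniform limit and the identification with the spectral radius follow mechanically.
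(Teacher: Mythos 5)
Your proof is correct, but it takes a genuinely different route from the paper's. The paper proves the key estimate combinatorially: it uses the eigenfunction $h$ to build the normalized weights $p_j(x)=q_j(x)h(\tau_j(x))/(\rho\, h(x))$, expands $B_q^N(1)(x)$ as a sum over words of length $N$ (its Proposition \ref{prop-powers-1}), and telescopes each product to get $P^{q}_x(w)=P^{p}_x(w)\,\rho^N h(x_0)/h(x_N)$; since the normalized products sum to $1$, the bounds $0<a\le h(x_0)/h(x_N)\le b$ give $\frac1N\ln B_q^N(1)(x)=\ln\rho+O(1/N)$ uniformly. You reach the same core comparability $c_1\rho^N\le B_q^N(1)(x)\le c_2\rho^N$ directly from positivity: monotonicity of $B_q^N$ applied to $h/M\le 1\le h/m$ together with $B_q^N(h)=\rho^N h$. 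Your argument is shorter and more elementary --- it dispenses with both the word expansion and the normalization construction --- and it has the additional merit of making explicit the identity $\|B_q^N\|=\|B_q^N(1)\|_\infty$ for positive operators, a step the paper's final Gelfand computation uses only implicitly when it passes from $\frac1N\ln\|B_q^N\|$ to $\sup_x \frac1N\ln B_q^N(1)(x)$. What the paper's longer route buys is reusable structure: the normalized IFSpdp $\mathcal{R}_p$ is exactly the object appearing later in Corollary \ref{normalization} and Theorem \ref{VarPrinciple}, and the trajectory-wise identity between $P^q_x$ and $P^p_x$ carries finer information than the operator-norm sandwich. One small point of care in your write-up is already handled correctly: the lower bound $B_q^N(1)(x)\ge \rho^N h(x)/M>0$ is what guarantees the logarithms are defined, since the weights are only assumed nonnegative.
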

\begin{proof}
From the hypothesis we get a normalized continuous
IFSpdp $\mathcal{R}_{  p}=(X,   \tau,   p)$,
where the weights are given by
\[
p_j (x)
=
q_j(x)\frac{ h(\tau_j(x))}{\rho \,h(x)} , \; j=0, ..., n-1.
\]
Note that
$P_{x}^{\hspace*{0.03cm}  q}(w_{0}, \ldots, w_{N-1})$ and
$P_{x}^{\hspace*{0.03cm}  p}(w_{0}, \ldots, w_{N-1})$
are related in the following way
\begin{align*}
  P_{x}^{\hspace*{0.03cm}  q}(w_{0},\ldots, w_{N-1})
  &=
  \prod_{j=0}^{N-1} q_{w_j}(x_j)
  =
  \prod_{j=0}^{N-1} p_{w_j}(x_j) \frac{\rho \,h(x_j)}{h(x_{j+1})}
  \\
  &=
  \rho^N \frac{\,h(x_0)}{h(x_{N})}\prod_{j=0}^{N-1} p_{w_j}(x_j)
  \\
  &=
  P_{x}^{\hspace*{0.03cm}  p}(w_{0}, \ldots, w_{N-1}) \rho^N \frac{h(x_0)}{h(x_{N})}.
\end{align*}
Since $X$ is compact and $h$ is strictly positive and continuous, we have
for some positive constants $a$ and $b$ the following inequalities
$0< a\leq h(x_0)/h(x_{N})\leq b$.

By using the Proposition \ref{prop-powers-1} and the above equality,
we get for any fixed $N\in\mathbb{N}$ the following expression
\begin{align*}
  \frac{1}{N} \ln\left(B_{  q}^N(1) (x)) \right)
  & = \frac{1}{N} \ln\left( \sum_{w_{0} \ldots w_{N-1} =0}^{n-1} P_{x}^{\hspace*{0.03cm}  q}(w_{0}, \ldots, w_{N-1}) \right) \\
  & = \frac{1}{N} \ln\left( \sum_{w_{0} \ldots w_{N-1} =0}^{n-1} P_{x}^{\hspace*{0.03cm}  p}(w_{0}, \ldots, w_{N-1}) \rho^N \frac{h(x_0)}{h(x_{N})} \right) \\
  & = \ln \rho  + O(1/N)+ \frac{1}{N} \ln\left( \sum_{w_{0} \ldots w_{N-1} =0}^{n-1} P_{x}^{\hspace*{0.03cm}  p}(w_{0}, \ldots, w_{N-1}) \right)\\
  & = \ln \rho  + O(1/N),
\end{align*}
where the term $O(1/n)$ is independent of $x$. 
Therefore for every $N\geq 1$  we have
\[
\sup_{x\in X}
\left|   \frac{1}{N} \ln\left(B_{  q}^N(1) (x)) \right) - \ln \rho \right|
=
O(1/N).
\]
which proves \eqref{1sobreNlnBN}. From the above inequality and Gelfand's formula
for the spectral radius we have
\begin{align*}
\left|   \ln \rho(B_{  q}) - \ln \rho \right|
&=
\left|   \ln \left( \lim_{N\to\infty} \|B^N_{  q}\|^{\frac{1}{N}}\right) - \ln \rho \right|
=
\lim_{N\to\infty}
\left| \frac{1}{N} \ln \|B^N_{  q}\| - \ln \rho \right|
\\
&\leq
\limsup_{N\to\infty}
\ \ \sup_{x\in X}
\left|   \frac{1}{N} \ln\left(B_{  q}^N(1) (x)) \right) - \ln \rho \right|
\\
&\leq
\limsup_{N\to\infty}
\frac{C}{N}
=0.
\qedhere
\end{align*}
\end{proof}

\begin{remark}
In the special case, where there is a positive continuous function $\psi:X\to\mathbb{R}$
(potential) such that $q_i(x)\equiv\psi(\tau_i (x))$ for all $i=0,1,\ldots,n-1$ and $x\in X$
we will show later that $\ln \rho(B_{ {q}})$
is the topological pressure of $\psi$ (Definition \ref{Pressure} ).
\end{remark}

In the following example, Lemma \ref{powers description} is used to show that
lack of contractiveness and existence of a continuous eigenfunction associated 
to a positive eigenvalue, in general, impose very strong restrictions on the weights. 

\begin{example}\label{bad example}
We take $X=[0,1]$ with its usual topology and $\tau_i(x):=(-1)^i x + i$, for $i=0,1$. 
For any choice of a continuous weight $q$, we have  
that $\mathcal{R}_{  q}=(X,\tau,q)$ is a continuous IFSw. 
Since the derivative of $\tau_i(x)$ is equals to $(-1)^{i}$, 
for any point $x\in (0,1)$, no hyperbolicity exists in this system. 
Moreover, for each $t \in [0,1/2]$ the set $A_t=[t, 1-t]$ 
is fixed by the Hutchinson-Barnsley operator 
$A\longmapsto F_{R}(A)\equiv\tau_0(A)\cup\tau_1(A)$,
so this IFS has no attractor. 
Note that  $\cap_{n} \tau_{w_{0}} \circ\cdots \circ\tau_{w_{n-1}}([0,1])$  
is \textbf{never} a singleton, so this example do not fit
the recent theory of weakly hyperbolic sequences developed 
in \cite{2016arXiv160502752M,2016arXiv160706165M} 
neither \cite{MR3623766}. 

For $i=0,1$ consider the weight $q_i(x)=\psi(\tau_i(x))$, where 
$\psi:X\to\mathbb{R}$ a continuous and strictly positive function. 
In this case, the transfer operator 
is given by $B_{  q}(1)(x)=\psi(x) + \psi(1-x)$ and 
a simple induction on $N$, shows that $B_{q}^N(1)(x)=(\psi(x) +\psi(1-x))^N$
for all $N\geq 1$. Therefore
\[ 
\lim_{N \to \infty}
\frac{1}{N}
\ln\left(B_{  q}^N(1) (x) \right)
= \lim_{N \to \infty}
\frac{1}{N}
\ln\left(B_{  q}(1) (x) \right)^N= \ln (\psi(x) +\psi(1-x)).
\]
Because of Lemma \ref{powers description}, 
unless $\psi$ is chosen such that $\psi(x) +\psi(1-x)= \rho(B_{q}), \forall x \in X$,
no positive continuous eigenfunction for $B_q$, associated to a positive 
eigenvalue, can exist. 

On the other hand, if $\psi(x) +\psi(1-x)\equiv c>0, \forall x \in X$, 
then $B_{q}(1)= c \cdot 1$, i.e., the constant function $h\equiv 1$ 
is a continuous positive eigenfunction and $c=\rho(B_q)$.  
\begin{figure}[h]
\centering
\includegraphics[scale=0.7]{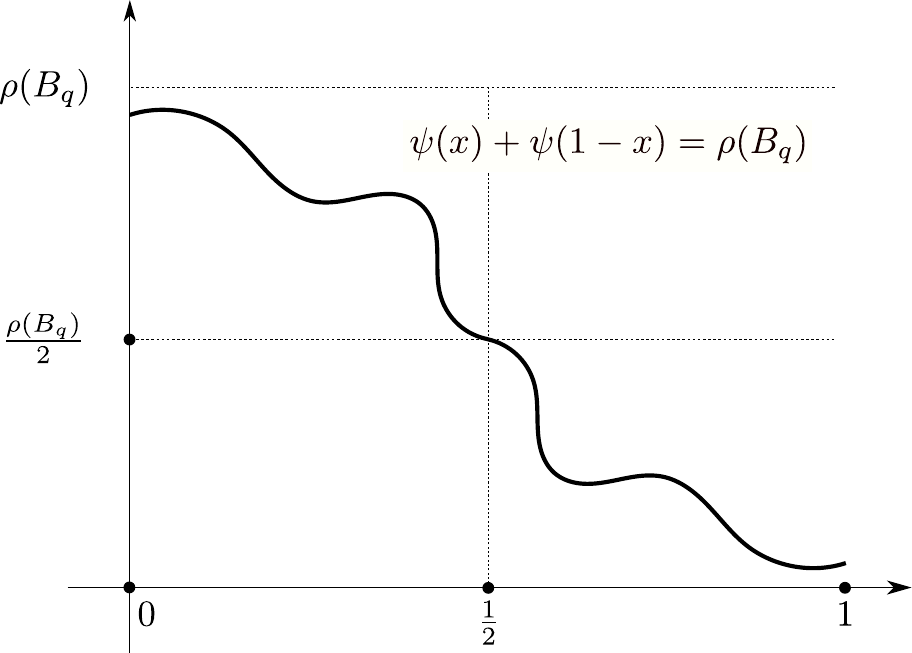}
\caption[Graph of $\psi$]{The graph of a continuous potential $\psi:[0,1]\to\mathbb{R}$ 
satisfying the condition $\psi(x) +\psi(1-x)\equiv \rho(B_{q})$ }
\label{fig:graph1}
\end{figure}

For further discussion on this matter see
Example 3 in \cite{MR1078079}.

\end{example}

\section{Markov Operator and its Eigenmeasures}\label{markov op proper}

If  $\mathcal{R}_{  q}=(X,   \tau,   q)$ is a normalized IFSw, then
$\mathcal{L}_{  q}$ maps $\mathscr{M}_1(X)$ to itself.
Since $\mathscr{M}_1(X)$ is a convex and compact Hausdorff space
we can apply the Tychonoff-Schauder fixed point theorem to ensure the existence
of at least one fixed point for $\mathcal{L}_{  q}$.
In \cite{MR971099} (also \cite{MR625600} or \cite{MR977274})
is shown, under suitable contraction hypothesis,
that $\mathcal{L}_{  q}$ has a unique fixed point $\mu$.
Such probability measure $\mu$ is called the Barnsley-Hutchinson measure
for $\mathcal{R}_{  q}$.

The aim of this section is to present a generalization
of the above result for a non-normalized IFSw.  The central
result is the Theorem \ref{prop-exist-auto-medida}.

\begin{theorem}\label{prop-exist-auto-medida}
Let $\mathcal{R}_{  q}=(X,   \tau,   q)$ be a continuous IFSw
with positive weights.
Then there exists a positive number $\rho\leq \rho(B_{ {q}})$ so that the set
$
\mathcal{G}^{*}( {q})
=
\{
\nu \in \mathscr{M}_1(X): \mathcal{L}_{  q}\nu =\rho\nu
\}
$
is not empty.
\end{theorem}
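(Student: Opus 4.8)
The plan is to obtain the eigenmeasure as a fixed point of a suitably renormalized version of $\mathcal{L}_q$. Since the weights are positive and $X$ is compact, the continuous function $B_q(1)(x) = \sum_{i=0}^{n-1} q_i(x)$ is strictly positive, hence bounded below by a constant $c > 0$. Consequently, for every $\nu \in \mathscr{M}_1(X)$ the quantity
\[
\Lambda(\nu) := (\mathcal{L}_q \nu)(X) = \int_X B_q(1) \, d\nu \geq c > 0,
\]
and I would use it to define the nonlinear normalization map
\[
G: \mathscr{M}_1(X) \to \mathscr{M}_1(X), \qquad G(\nu) = \frac{\mathcal{L}_q \nu}{\Lambda(\nu)}.
\]
Because $B_q$ is a positive operator, its transpose $\mathcal{L}_q$ sends positive measures to positive measures, and the normalization forces $(G(\nu))(X) = 1$; thus $G$ is well defined and maps the probability simplex into itself.

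First I would verify that $G$ is continuous for the weak-$*$ topology, which reduces to the weak-$*$ continuity of $\mathcal{L}_q = B_q^*$: if $\nu_k \to \nu$ weak-$*$, then for every $f \in C(X,\mathbb{R})$ we have $B_q(f) \in C(X,\mathbb{R})$, whence $\int f \, d[\mathcal{L}_q \nu_k] = \int B_q(f) \, d\nu_k \to \int B_q(f) \, d\nu = \int f \, d[\mathcal{L}_q \nu]$; likewise $\Lambda(\nu_k) \to \Lambda(\nu) \geq c > 0$, so $G(\nu_k) \to G(\nu)$. Since $X$ is a compact metric space, $\mathscr{M}_1(X)$ is a nonempty, convex, weak-$*$ compact subset of the locally convex space $\mathscr{M}_s(X)$, so the Tychonoff--Schauder fixed point theorem yields a $\nu^*$ with $\nu^* = G(\nu^*)$, that is,
\[
\mathcal{L}_q \nu^* = \rho \, \nu^*, \qquad \rho := \Lambda(\nu^*) \geq c > 0.
\]
This already shows $\nu^* \in \mathcal{G}^*(q)$ and that the set is nonempty with a strictly positive eigenvalue.

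It remains to bound the eigenvalue from above by $\rho(B_q)$. Iterating the eigenrelation gives $\mathcal{L}_q^N \nu^* = \rho^N \nu^*$, and testing against the constant function $1$ through the duality yields
\[
\rho^N = (\mathcal{L}_q^N \nu^*)(X) = \int_X B_q^N(1) \, d\nu^* \leq \|B_q^N(1)\|_\infty \leq \|B_q^N\|.
\]
Taking $N$-th roots and letting $N \to \infty$, Gelfand's formula $\rho(B_q) = \lim_{N\to\infty} \|B_q^N\|^{1/N}$ gives $\rho \leq \rho(B_q)$, completing the argument.

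The routine parts are the well-definedness and continuity of $G$ together with the fixed point application; the one point demanding care is ensuring the denominator $\Lambda(\nu)$ stays bounded away from zero, which is precisely where the positivity of the weights (combined with the compactness of $X$) is indispensable — without it $G$ could fail to be continuous or even defined, and the normalization trick would break down. Given that, the eigenvalue estimate follows in a few lines from Gelfand's formula.
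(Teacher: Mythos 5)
Your proof is correct and follows essentially the same route as the paper: normalize the Markov operator, apply the Tychonoff--Schauder fixed point theorem on the weak-$*$ compact convex set $\mathscr{M}_1(X)$, and bound the eigenvalue via $\rho^N = \int_X B_{q}^N(1)\, d\nu^* \leq \|B_{q}^N\|$ together with Gelfand's formula. The only difference, inessential for the statement as written, is that the paper additionally defines $\rho$ as the supremum of the eigenvalues arising from all such normalized fixed points and shows by a weak-$*$ compactness argument that this supremum is itself attained, whereas you work with an arbitrary fixed point.
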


\begin{proof}
Notice that the mapping
\[
\mathscr{M}_1(X)\ni
\gamma
\mapsto
\frac{\mathcal{L}_{  q}(\gamma) } {\mathcal{L}_{  q}(\gamma)(X)}
\]
sends $\mathscr{M}_1(X)$ to itself. From its convexity
and compactness, in the weak topology which is Hausdorff when $X$ is metric and compact,
it follows from the continuity of $\mathcal{L}_{  q}$ and the Tychonov-Schauder Theorem
that there is at least one probability measure $\nu$ satisfying
$\mathcal{L}_{  q}(\nu)=(\mathcal{L}_{  q}(\nu)(X))\, \nu$.

We claim that 
\begin{align}\label{des1-exi-auto-medida}
n\min_{j\in\{1,\ldots,n\}}\inf_{x\in X} q_j(x)
\leq
\mathcal{L}_{  q}(\gamma)(X)
\leq
n\max_{j\in\{1,\ldots,n\}} \{\|q_j\|_{\infty}\}
\end{align}
for every $\gamma\in \mathscr{M}_1(X)$.
Indeed, 
\begin{align*}
n\min_{j\in\{1,\ldots,n\}}\inf_{x\in X} q_j(x)
\leq
\int_{X} B_{ {q}}(1) \, \text{d}\gamma
=
\int_{X} 1 \, \text{d}[\mathcal{L}_{  q}\gamma]
=
\mathcal{L}_{  q}(\gamma)(X)
\end{align*}
and similarly
\begin{align*}
n\max_{j\in\{1,\ldots,n\}} \{\|q_j\|_{\infty}\}
\geq
\int_{X} B_{ {q}}(1) \, \text{d}\gamma
=
\int_{X} 1 \, \text{d}[\mathcal{L}_{  q}\gamma]
=
\mathcal{L}_{  q}(\gamma)(X).
\end{align*}

From the inequality \eqref{des1-exi-auto-medida} follows that
\[
0
<
\rho
\equiv
\sup\{ \mathcal{L}_{  q}(\nu)(X):
		\mathcal{L}_{  q}(\nu)
		=
		(\mathcal{L}_{  q}(\nu)(X))\, \nu
	\}
<
+\infty.
\]
By a compactness argument one can show the existence of
$\nu\in \mathscr{M}_{1}(X)$ so that
$\mathcal{L}_{  q}\nu=\rho\nu$.
Indeed, let $(\nu_n)_{n\in\mathbb{N}}$
be a sequence such that
$\mathcal{L}_{  q}(\nu_n)(X)\uparrow \rho$,
when $n$ goes to infinity.
Since $\mathscr{M}_1(X)$ is compact metric space
in the weak topology we can assume, up to subsequence,
that $\nu_n\rightharpoonup \nu$. This convergence
together with the continuity of $\mathcal{L}_{  q}$ provides
\[
\mathcal{L}_{  q}\nu
=
\lim_{n\to\infty}\mathcal{L}_{  q}\nu_n
=
\lim_{n\to\infty}\mathcal{L}_{  q}(\nu_n)(X)\nu_n
=
\rho\, \nu,
\]
thus showing that the set
$
\mathcal{G}^{*}( {q})
\equiv\{
\nu \in \mathscr{M}_1(X):
\mathcal{L}_{  q}\nu =\rho\, \nu
\}
\neq
\emptyset
$.

To finish the proof we observe that by using  any
$\nu\in \mathcal{G}^{*}( {q})$, we get the following inequality
\begin{align*}
\rho^N
=
\int_{X} B_{ {q}}^{N}(1)\, \text{d}\nu
\leq
\|B_{ {q}}^{N}\|.
\end{align*}
From this inequality and Gelfand's Formula
follows that $\rho\leq \rho(B_{ {q}})$.
\end{proof}

\begin{theorem}\label{Ruelle IFS cont}
Let $\mathcal{R}_{  q}=(X,   \tau,   q)$ be a continuous IFSw
and suppose $B_{  q}$ has a positive continuous eigenfunction.
Then the eingenvalue $\rho$ of the Markov operator,
provided by Theorem \ref{prop-exist-auto-medida},
satisfies $\rho = \rho_{  q}$.
\end{theorem}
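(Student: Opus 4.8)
The plan is to establish the two inequalities $\rho\le\rho(B_q)$ and $\rho\ge\rho(B_q)$ separately, writing $\rho_q=\rho(B_q)$ for the spectral radius. The first is already part of the conclusion of Theorem~\ref{prop-exist-auto-medida}, so the entire content of the statement reduces to producing a \emph{probability} eigenmeasure of $\mathcal{L}_q$ whose eigenvalue is exactly $\rho(B_q)$. Indeed, recall that $\rho$ was defined in Theorem~\ref{prop-exist-auto-medida} as the supremum of the numbers $\mathcal{L}_q(\nu)(X)$ taken over all normalized eigenmeasures $\nu$; hence the mere existence of one such $\nu$ with eigenvalue $\rho(B_q)$ forces $\rho\ge\rho(B_q)$, and combined with $\rho\le\rho(B_q)$ this yields $\rho=\rho_q$.

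First I would fix a strictly positive continuous eigenfunction $h$, say $B_q(h)=\lambda h$ with $\lambda>0$ (positivity of $\lambda$ being forced by $h>0$ together with the positivity of the weights). Lemma~\ref{powers description} then applies verbatim and identifies the eigenvalue, giving $\lambda=\rho(B_q)$. Following the construction in the proof of that lemma, I would form the normalized IFSpdp $\mathcal{R}_p=(X,\tau,p)$ with the conjugated weights $p_j(x)=q_j(x)\,h(\tau_j(x))/(\lambda h(x))$, and record the intertwining identity $B_p(f)=(\lambda h)^{-1}B_q(hf)$, which holds for every $f\in C(X,\mathbb{R})$ by a one-line computation.

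Next, since $\mathcal{R}_p$ is normalized, $\mathcal{L}_p$ maps $\mathscr{M}_1(X)$ into itself, and the Tychonoff--Schauder theorem recalled at the start of this section provides a fixed point $\tilde\nu\in\mathscr{M}_1(X)$, that is, $\int B_p(f)\,d\tilde\nu=\int f\,d\tilde\nu$ for all $f$. I would push this fixed point back to the $q$-world through the change of measure $d\mu=h^{-1}\,d\tilde\nu$, which is a nonzero finite positive measure because $h$ is bounded away from $0$ on the compact space $X$. Substituting $f=g/h$ in the fixed-point identity and invoking the intertwining relation turns $\int B_p(g/h)\,d\tilde\nu=\int (g/h)\,d\tilde\nu$ into $\int B_q(g)\,d\mu=\lambda\int g\,d\mu$ for every $g\in C(X,\mathbb{R})$; by the duality defining $\mathcal{L}_q$ this reads $\mathcal{L}_q\mu=\lambda\mu$. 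Normalizing, $\nu:=\mu/\mu(X)\in\mathscr{M}_1(X)$ satisfies $\mathcal{L}_q\nu=\lambda\nu$, so that $\mathcal{L}_q(\nu)(X)=\lambda=\rho(B_q)$.

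Finally, this $\nu$ belongs to the family over which the supremum defining $\rho$ is taken, whence $\rho\ge\lambda=\rho(B_q)=\rho_q$; together with $\rho\le\rho_q$ from Theorem~\ref{prop-exist-auto-medida} this completes the proof. The only step requiring genuine care is the measure-theoretic conjugation by $h$ --- verifying that $\mu$ is a well-defined nonzero finite measure and that the change of variables is legitimate for every continuous $g$ --- while everything else is a direct appeal either to Lemma~\ref{powers description} or to the duality between $B_q$ and $\mathcal{L}_q$.
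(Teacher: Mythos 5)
Your proof is correct, but it follows a genuinely different route from the paper's. The paper keeps the eigenmeasure $\nu\in\mathcal{G}^{*}(q)$ handed to it by Theorem \ref{prop-exist-auto-medida} and shows directly that its eigenvalue dominates the spectral radius: it integrates the uniform limit $\tfrac{1}{N}\ln B_{q}^{N}(1)\to\ln\rho(B_{q})$ from Lemma \ref{powers description} against $\nu$, and uses Jensen's inequality together with $\int_{X}B_{q}^{N}(1)\,d\nu=\rho^{N}$ to get $\ln\rho(B_{q})\leq\ln\rho$; combined with $\rho\leq\rho(B_{q})$ this closes the proof. You instead \emph{construct} an eigenmeasure attaining the spectral radius: you use the eigenfunction $h$ to conjugate to the normalized IFSpdp (the same trick used inside the proof of Lemma \ref{powers description}), obtain a fixed point $\tilde\nu$ of $\mathcal{L}_{p}$ from Tychonoff--Schauder, and pull it back through the density $h^{-1}$ to produce $\nu$ with $\mathcal{L}_{q}\nu=\rho(B_{q})\nu$ --- the classical Ruelle--Perron--Frobenius change-of-measure argument --- and then invoke the supremum characterization of $\rho$. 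Each approach buys something: yours is constructive and exhibits the explicit correspondence $\tilde\nu\mapsto h^{-1}d\tilde\nu$ (suitably normalized) between fixed points of the normalized Markov operator and eigenmeasures of $\mathcal{L}_{q}$ at the spectral radius, which anticipates the normalization machinery of Section \ref{applic eq state}; the paper's Jensen argument proves something slightly stronger, namely that \emph{every} probability eigenmeasure of $\mathcal{L}_{q}$ must have eigenvalue at least (hence, by the Gelfand bound, exactly) $\rho(B_{q})$ under the eigenfunction hypothesis, not merely that the supremum $\rho$ equals $\rho(B_{q})$. One small point to flag: your argument leans on the fact that $\rho$ is \emph{defined} as the supremum of $\mathcal{L}_{q}(\nu)(X)$ over normalized eigenmeasures, which appears in the proof of Theorem \ref{prop-exist-auto-medida} rather than in its statement; this is legitimate here because the present theorem explicitly refers to that particular $\rho$, but it makes your proof depend on the internals of that proof, whereas the paper's argument needs only the existence of some $\nu\in\mathcal{G}^{*}(q)$ with $\rho\leq\rho(B_{q})$.
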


\begin{proof}
Note that we can apply Lemma \ref{powers description}
to ensure that
$N^{-1}\ln\left(B_{  q}^N(1) (x) \right)\to \ln \rho(B_{ {q}})$,
uniformly in $x$ when $N\to\infty$.
By using this convergence, the Lebesgue Dominated Convergence Theorem
and the Jensen Inequality we get
\begin{align*}
\log\rho(B_{ {q}})
=
\lim_{N\to\infty} \frac{1}{N} \int_{X} \ln B_{ {q}}^{N}(1)\, \text{d}\nu
\leq
\lim_{n\to\infty} \frac{1}{N} \ln \int_{X} B_{ {q}}^{N}(1)\, \text{d}\nu
=
\log \rho,
\end{align*}
where $\nu\in \mathcal{G}^{*}( {q})$.
Since $\rho$ is always less than or equal to $\rho(B_{ {q}})$
follows from the above inequality that $\rho=\rho(B_{ {q}})$.
\end{proof}

\section{Holonomic Measure and Disintegrations} \label{holon measures}
An invariant measure for a classical dynamical system $T: X \to X$ on a compact space
is a measure $\mu$ satisfying  for all $f\in C(X,\mathbb{R})$
\[
\int_{X} f(T(x)) d\mu=
\int_{X} f(x) d\mu,
\quad \text{equivalently}\quad
\int_{X} f(T(x))-f(x) d\mu= 0.
\]
From the Ergodic Theory point of view the natural generalization
of this concept for an IFS $\mathcal{R}=(X,   \tau)$ is the
concept of holonomy.

Consider the cartesian product space $\Omega= X \times \{0, ..., n-1\}$
and for each $f\in C(X,\mathbb{R})$ its ``discrete differential''
$df: \Omega \to \mathbb{R}$ defined by $[d_x f](i)\equiv f(\tau_i (x)) -f(x)$.

\begin{definition}\label{invariance}
A measure $\hat{\mu}$ over $\Omega$ is said holonomic,
with respect to an IFS $\mathcal{R}$ if
for all $f\in C(X,\mathbb{R})$ we have
\begin{align*}
\int_{\Omega}[d_x f](i) \, d\hat{\mu}(x,i)=0.
\end{align*}
Notation,
$
\displaystyle \mathcal{H}(\mathcal{R})
\equiv
\{\hat{\mu}  \, | \, \hat{\mu} 
\text{ is a holonomic probability measure with respect to}\ \mathcal{R}\}.
$
\end{definition}

Since $X$ is compact the set of all holonomic probability measures
is obviously convex and compact. It is also not empty because $X$
is compact and any average
\[
\hat{\mu}_{N}
\equiv
\frac{1}{N} \sum_{j=0}^{N-1} \delta_{(x_j, i_j)},
\]
where $x_{j+1} = \tau_{i_j} (x_j)$ and $x_0\in X$ is fixed,
will have their cluster
points in  $\mathcal{H}(\mathcal{R})$. Indeed, for all $N\geq 1$ we have
the following identity
\begin{align*}
\int_{\Omega} [d_x f](i) \, d\hat{\mu}_{N}(x,i)
&=
\frac{1}{N} \sum_{j=0}^{N-1} [d_{x_j} f](i_j)
=
\frac{1}{N} (f(\tau_{i_{N-1}}(x_{N-1}))-f(x_0) ).
\end{align*}
From the above expression is easy to see that
if $\hat{\mu}$ is a cluster point of the sequence $(\hat{\mu}_N)_{N\geq 1}$,
then there is a subsequence $(N_k)_{k\to\infty}$ such that
\begin{align*}
\int_{\Omega} [d_x f](i) \, d\hat{\mu}(x,i)
&=
\lim_{k\to\infty}
\int_{\Omega} [d_x f](i) \, d\hat{\mu}_{N_k}(x,i)
\\
&=
\lim_{k\to\infty}\frac{1}{N_k} (f(\tau_{i_{N_k-1}}(x_{N_k-1}))-f(x_0) )
=0.
\end{align*}

\begin{theorem}[Disintegration]
\label{teo-disintegracao}
Let $X$ and $Y$ be compact metric spaces,
$\hat{\mu}:\mathscr{B}(Y)\to [0,1]$ a Borel probability measure, $T:Y \to X$ a
Borel mensurable function and for each $A\in\mathscr{B}(X)$ define a
probability measure $\mu(A)\equiv \hat{\mu}(T^{-1}(A))$. Then there exists a
family of Borel probability measures $(\mu_{x})_{x \in X}$ on $Y$,
uniquely determined $\mu$-a.e, such that
\begin{enumerate}
\item
$
\mu_{x}(Y\backslash T^{-1}(x))
=
0
$, $\mu$-a.e;

\item
$
\displaystyle
\int_{Y} f\, d\hat{\mu}
= \int_{X}\left[\int_{T^{-1}(x)} \!\!\! f(y)\  d\mu_{x}(y)\right] d\mu(x)
$.
\end{enumerate}
This decomposition is called the \textit{disintegration}
of $\hat{\mu}$, with respect to $T$.
\end{theorem}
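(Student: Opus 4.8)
The plan is to build the fibre measures $\mu_x$ from Radon--Nikodym derivatives and to exploit the separability of $C(Y,\mathbb{R})$ (guaranteed by compactness and metrizability of $Y$) in order to turn the merely a.e.-defined derivatives into honest probability measures for $\mu$-almost every $x$. First I would, for each $f\in C(Y,\mathbb{R})$, consider the signed Borel measure $\nu_f$ on $X$ defined by $\nu_f(B)=\int_{T^{-1}(B)}f\,d\hat{\mu}$. Since $|\nu_f(B)|\le \|f\|_\infty\,\mu(B)$ we have $\nu_f\ll\mu$, so the Radon--Nikodym theorem supplies a density $g_f=d\nu_f/d\mu\in L^1(\mu)$ characterized by the \emph{localized} identity $\int_B g_f\,d\mu=\int_{T^{-1}(B)}f\,d\hat\mu$ for every $B\in\mathscr{B}(X)$.

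Next I would fix a countable dense $\mathbb{Q}$-linear subspace $\mathcal{D}\subset C(Y,\mathbb{R})$ containing the constant $1$. Each $g_f$ is defined only $\mu$-a.e., but since $\mathcal{D}$ is countable the relations $g_{\alpha f+\beta f'}=\alpha g_f+\beta g_{f'}$ (rational $\alpha,\beta$), $g_f\ge 0$ when $f\ge 0$, $g_1=1$, and $|g_f|\le\|f\|_\infty$ hold simultaneously on a single Borel set $X_0$ of full $\mu$-measure. For $x\in X_0$ the map $f\mapsto g_f(x)$ is then a positive, $1$-preserving, $\|\cdot\|_\infty$-Lipschitz $\mathbb{Q}$-linear functional on $\mathcal{D}$, so it extends uniquely to a positive normalized linear functional on $C(Y,\mathbb{R})$; the Riesz Representation Theorem then yields a unique Borel probability measure $\mu_x$ with $\int_Y f\,d\mu_x=g_f(x)$ for all $f$. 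For $x\notin X_0$ I would set $\mu_x$ to be an arbitrary fixed probability. Measurability of $x\mapsto\int_Y f\,d\mu_x$ holds on $\mathcal{D}$ and passes to all of $C(Y,\mathbb{R})$ by uniform approximation.

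The localized identity then reads $\int_B\big(\int_Y f\,d\mu_x\big)\,d\mu(x)=\int_{T^{-1}(B)}f\,d\hat\mu$, and a monotone class argument upgrades it to every bounded Borel $f$ on $Y$ and every $B$. To obtain $(1)$ I would specialize this to $f=\mathbf 1_{T^{-1}(A)}=\mathbf 1_A\circ T$, giving $\int_B \mu_x(T^{-1}(A))\,d\mu(x)=\hat\mu(T^{-1}(A\cap B))=\mu(A\cap B)=\int_B \mathbf 1_A\,d\mu$ for all $B$, whence $\mu_x(T^{-1}(A))=\mathbf 1_A(x)$ for $\mu$-a.e.\ $x$. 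Applying this to a countable base $\{U_k\}$ of $X$ on a common full-measure set, and using that in a metric space $X\setminus\{x\}=\bigcup\{U_k:x\notin U_k\}$, subadditivity gives $\mu_x\big(T^{-1}(X\setminus\{x\})\big)\le\sum_{k:\,x\notin U_k}\mu_x(T^{-1}(U_k))=0$, which is exactly $(1)$. With $(1)$ in hand, $\int_Y f\,d\mu_x=\int_{T^{-1}(x)}f\,d\mu_x$ and the $B=X$ case of the identity is precisely $(2)$. Uniqueness $\mu$-a.e.\ follows because any two disintegrations agree against every $f\in\mathcal{D}$ for $\mu$-a.e.\ $x$, and Riesz uniqueness promotes this to $\mu_x=\mu_x'$.

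The main obstacle is the second step: the derivatives $g_f$ are each defined only up to $\mu$-null sets, so there is no a priori common set on which $f\mapsto g_f(x)$ is simultaneously linear and positive for \emph{all} $f\in C(Y,\mathbb{R})$. Separability is what saves the argument — it reduces the uncountably many a.e.\ constraints to countably many and lets Lipschitz continuity extend the functional from $\mathcal{D}$ to $C(Y,\mathbb{R})$ pointwise in $x$. The concentration property $(1)$ is the other delicate point, and it hinges on retaining the localized Radon--Nikodym identity over all $B$ rather than only its integrated $B=X$ form.
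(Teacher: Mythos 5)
Your argument is correct, but note that the paper itself does not prove Theorem \ref{teo-disintegracao}: its ``proof'' is a citation to \cite{MR521810} (p.~78) and \cite{MR2401600} (Theorem 5.3.1). So there is no internal proof to compare against; what you have written is the standard self-contained argument for the compact metric case, and it is essentially the construction carried out in those references (which treat more general Radon/Polish settings, where compactness of $Y$ must be replaced by inner regularity): Radon--Nikodym densities $g_f$ for $f$ in a countable dense $\mathbb{Q}$-linear subspace of $C(Y,\mathbb{R})$, a single full-measure set $X_0$ on which $f\mapsto g_f(x)$ is $\mathbb{Q}$-linear, positive, normalized and $1$-Lipschitz, Riesz representation to produce $\mu_x$, a monotone class argument to pass to bounded Borel integrands, and a countable base of $X$ to convert the localized identity into the concentration property (1). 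Your approach buys exactly what the paper outsources: a proof using only the stated hypotheses (compactness and metrizability of $X$ and $Y$, Borel measurability of $T$). Two points you should tighten. First, fix Borel representatives of the Radon--Nikodym derivatives, so that $x\mapsto\int_Y f\,d\mu_x$ is genuinely Borel on $X_0$ for $f\in\mathcal{D}$ before invoking uniform approximation. Second, your uniqueness sentence is too quick: for a competing family $(\mu_x')_{x\in X}$ satisfying (1) and (2), the hypothesis only gives the \emph{global} ($B=X$) identity, so you must first extend (2) for $(\mu_x')$ to bounded Borel integrands by the same monotone class argument, then localize by testing against $f\cdot(\mathbf{1}_B\circ T)$ and using (1) to replace $\mathbf{1}_B(T(y))$ by $\mathbf{1}_B(x)$ on the fibre; only then do you obtain $\int_Y f\,d\mu_x=\int_Y f\,d\mu_x'$ for $\mu$-a.e.\ $x$ for each $f\in\mathcal{D}$, after which countability of $\mathcal{D}$ and Riesz uniqueness give $\mu_x=\mu_x'$ $\mu$-a.e.
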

\begin{proof}
For a proof of this theorem, see \cite{MR521810} p.78 or \cite{MR2401600}, Theorem 5.3.1.
\end{proof}

In this paper we are interested in disintegrations in cases where
$Y$ is the cartesian product $\Omega\equiv X\times\{0,\ldots,n-1\}$ and $T:\Omega\to X$
is the projection on the first coordinate.
In such cases if $\hat{\mu}$ is any Borel probability measure on $\Omega$,
then follows from the first conclusion
of Theorem \ref{teo-disintegracao} that the disintegration
of $\hat{\mu}$ provides for each $x\in X$ a unique probability measure
$\mu_{x}$ ($\mu$-a.e.) supported on the finite
set $\{(x,0),\ldots,(x,n-1)\}$.
So we can write the disintegration of $\hat{\mu}$
as $d\hat{\mu}(x,i)= d\mu_{x}(i)d\mu(x)$, where here we are abusing notation
identifying $\mu_x(\{(x,j)\})$ with $\mu_x(\{j\})$.

Now  we take
$\hat{\mu} \in \mathcal{H}(\mathcal{R})$ and $f:\Omega\to \mathbb{R}$
as being any bounded continuous function, depending only on its first coordinate.
From the very definition of holonomic measures we have the following equations
\[
\int_{\Omega}[d_x f](i) \, d\hat{\mu}(x,i)  =  0
\Longleftrightarrow
\int_{\Omega}f(\tau_i (x))  \, d\hat{\mu}(x,i)  = \int_{\Omega} f(x) \, d\hat{\mu}(x,i)
\]
by disintegrating both sides of the second equality above we get that
\[
\int_{X}\int_{\{0, ..., n-1\}} f(\tau_i (x))  \, d\mu_{x}(i)d\mu(x)
=
\int_{X}\int_{\{0, ..., n-1\}} f(x) \, d\mu_{x}(i)d\mu(x).
\]
Recalling that $\mu_x$ is a probability measure
follows from the above equation that
\[
\int_{X}\sum_{i=0}^{n-1} \mu_{x}(i) f(\tau_i (x))  \, d\mu(x)
=
\int_{X} f(x) \,d\mu(x).
\]

The last equation establish a natural link
between holonomic measures for an IFS $\mathcal{R}$
and disintegrations. Given an IFS $\mathcal{R}=(X,\tau)$
and $\hat{\mu}\in \mathcal{H}(\mathcal{R})$ we can use the previous
equation to define an IFSpdp $\mathcal{R}_{ {q}}=(X,\tau, {q})$, where the
weights $q_i(x)=\mu_x(\{i\})$. If $B_{ {q}}$ denotes the transfer operator
associated to $\mathcal{R}_{ {q}}$ we have from the last equation the following
identity
\[
\int_{X} B_{ {q}}(f)  \, d\mu(x)
  =
  \int_{X} f(x) \,d\mu(x).
\]
Since in the last equation $f$ is an arbitrary bounded measurable
function, depending only on the first coordinate, follows that
the Markov operator associated to the IFSpdp $\mathcal{R}_{ {q}}$ satisfies
\[
\mathcal{L}_{ {q}} (\mu) = \mu.
\]
In other words the ``second marginal'' $\mu$ of a holonomic measure $\hat{\mu}$
is always an eingemeasure for the Markov operator associated to the
IFSpdp $\mathcal{R}_{ {q}}=(X,\tau, {q})$ defined above.

Reciprocally. Since the last five equations are equivalent,
given an IFSpdp $\mathcal{R}_{ {q}}=(X,\tau, {q})$
such that the associated Markov operator has at least one fix point, i.e.,
$\mathcal{L}_{  q} (\mu) =\mu$, then it is possible to define a
holonomic probability measure $\hat{\mu}\in \mathcal{H}(\mathcal{R})$
given by
$d\hat{\mu}(x,i)= d\mu_x (i) \,d\mu(x)$,
where
$\mu_x (i)\equiv q_{i}(x)$.
This Borel probablity measure on $\Omega$ will be called
the {\bf holonomic lifting} of $\mu$, with respect to $\mathcal{R}_{ {q}}$.

\section{Entropy and Pressure for IFSw}\label{entropy and pressure}

In this section we introduce the notions of topological pressure and entropy.
We adopted variational formulations for both concepts because it allow us treat
very general IFSw. These definitions introduced here are inspired, and generalizes,
the recent theory of such objects in the context of symbolic dynamics
of the left shift mapping acting
on $X= M^{\mathbb{N}}$, where $M$ is an uncoutable compact metric space.

As in the previous section the mapping $T:\Omega\to X$ denotes the projection
on the first coordinate. Even when not explicitly mentioned,
any disintegrations of a probability measure $\hat{\nu}$, defined over $\Omega$,
will be from now considered with respect to $T$.

We write $B_1$ to denote the transfer operator $B_q$, 
where the weights $q_i(x)\equiv 1$ for all $x\in X$ and $i=0,\ldots,n-1$.

\begin{definition}[Average and Variational Entropies]\label{entropy}
Let $\mathcal{R}$ be an IFS, $\hat{\nu} \in \mathcal{H}(\mathcal{R})$ and
$d\hat{\nu}(x,i)=d\nu_{x}(i)d\nu(x)$ a disintegration of $\hat{\nu}$, with
respect to $T$.
The variational and average entropies of $\hat{\nu}$ are defined, respectively, by
\[
h_v(\hat{\nu})
\equiv
\inf_{ \substack{ g\in C(X, \mathbb{R}) \\ g>0  } }
\left\{ \int_X \ln \frac{B_{1}(g)(x)}{ g(x) }  d\nu(x) \right\}
\]
and
\[
h_a(\hat{\nu})
\equiv
-\int_X \sum_{i=0}^{n-1} q_{i}(x)  \ln q_{i}(x)\,  d\nu(x) ,
\]
where $q_i(x) \equiv \nu_{x}(i)$, for all $x\in X$ and $i=0,\ldots,n-1$.
\end{definition}

\begin{definition}[Optimal Function]\label{Optimal Variational g}
Let $\mathcal{R}$ be an IFS, $\hat{\nu} \in \mathcal{H}(\mathcal{R})$,
$d\hat{\nu}(x,i)=d\nu_{x}(i)d\nu(x)$ a disintegration of $\hat{\nu}$, with
respect to $T$ and $q_i(x) \equiv \nu_{x}(i)$, for all $x\in X$ and $i=0,\ldots,n-1$.
We say that a positive function $g \in C(X, \mathbb{R})$ is optimal,
with respect to the the IFSpdp $\mathcal{R}_{  q}=(X,   \tau,   q)$
if for all $i=0, ...,n-1$ we have
\[
q_i(x)
=
\frac{g(\tau_i(x))}
{B_1(g)(x)}.
\]
\end{definition}

As the reader probably already noted,
the expression of the average entropy $h_{a}$ is a familiar one.
In the sequel we prove a theorem establishing some relations
between the two previous defined concepts of entropies.
This result is a useful tool when doing
some computations regarding the pressure functional, which will be
defined later. Before state the theorem we recall a fundamental
inequality regarding non-negativity of the conditional entropy
of two probability vectors. See, for example,
the reference \cite{MR1085356} for a proof.

\begin{lemma}\label{entropy inequality ln}
For any probability vectors $(a_0,\ldots,a_{n-1})$ and $(b_0,\ldots,b_{n-1})$
we have
\[
-\sum_{i=0}^{n-1}  a_{i} \ln( a_{i})
\leq
- \sum_{i=0}^{n-1} a_{i} \ln( b_{i})
\]
and the equality is attained iff $a_{i}=b_{i}$.
\end{lemma}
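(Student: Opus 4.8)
The plan is to prove the equivalent statement $\sum_{i=0}^{n-1} a_i \ln(a_i/b_i) \geq 0$ (the Gibbs inequality / non-negativity of relative entropy), from which the displayed inequality follows by splitting the logarithm. Throughout I adopt the standard convention $0\ln 0 = 0$. First I would dispose of the degenerate indices: if $b_i = 0$ while $a_i > 0$ for some $i$, then the right-hand side $-\sum_i a_i \ln(b_i)$ is $+\infty$ and the inequality holds trivially, so I may assume the support of $a$ is contained in that of $b$. Writing $S = \{i : a_i > 0\}$, only indices in $S$ contribute to either sum, since the $a_i = 0$ terms vanish by convention.

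The analytic heart of the argument is the elementary single-variable inequality $\ln t \leq t - 1$ for all $t > 0$, with equality precisely when $t = 1$ (this is immediate from concavity of $\ln$, or by checking the sign of the derivative of $t \mapsto t - 1 - \ln t$). Applying it with $t = b_i/a_i$ for each $i \in S$ and summing against the weights $a_i$ gives
\[
\sum_{i \in S} a_i \ln\frac{b_i}{a_i}
\leq
\sum_{i \in S} a_i\left(\frac{b_i}{a_i} - 1\right)
=
\sum_{i \in S} b_i - \sum_{i \in S} a_i
\leq
1 - 1 = 0,
\]
where the last step uses $\sum_{i \in S} b_i \leq \sum_{i=0}^{n-1} b_i = 1$ together with $\sum_{i \in S} a_i = 1$. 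Rearranging yields $\sum_{i=0}^{n-1} a_i \ln(b_i) \leq \sum_{i=0}^{n-1} a_i \ln(a_i)$, which is exactly the claimed inequality after negating both sides.

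For the equality clause, I would trace back through the two inequalities used. Equality in the first forces $\ln(b_i/a_i) = b_i/a_i - 1$ for every $i \in S$, hence $b_i/a_i = 1$, i.e. $b_i = a_i$ on $S$; equality in the second forces $\sum_{i \in S} b_i = 1$, hence $b_i = 0$ for $i \notin S$ (which coincides with $a_i = 0$ there). Together these give $a_i = b_i$ for all $i$, and the converse implication is immediate. The main obstacle here is not the analytic inequality but the careful bookkeeping of the zero entries in the equality characterization; once the support reduction to $S$ is made explicit, the equality case follows cleanly from the strictness of $\ln t < t - 1$ for $t \neq 1$.
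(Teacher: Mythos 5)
Your proof is correct, but there is nothing in the paper itself to compare it against: the paper does not prove this lemma, it states it as a known fact and delegates the proof to the reference \cite{MR1085356}. Your argument is the standard self-contained proof of the Gibbs inequality (non-negativity of relative entropy): apply $\ln t \leq t - 1$ at $t = b_i/a_i$, weight by $a_i$, sum over the support $S$ of $a$, and use $\sum_{i \in S} b_i \leq 1$; this is essentially the argument found in the textbook sources the citation points to, so you have replaced a citation by a complete proof rather than found a genuinely different route. The bookkeeping you do with zero entries is exactly what is needed for the ``iff'' clause to be correct as stated: the convention $0\ln 0 = 0$, the disposal of the degenerate case $b_i = 0 < a_i$ (where the right-hand side is $+\infty$), and, in the equality analysis, recovering $b_i = 0$ off $S$ from $\sum_{i \in S} b_i = 1$, since the pointwise equality criterion for $\ln t \leq t-1$ is only available on $S$. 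The one point you leave implicit is that equality cannot occur in the degenerate case; this is immediate, since the left-hand side is bounded by $\ln n$ while the right-hand side is infinite, so the ``only if'' direction is vacuous there and your support-reduction argument covers all remaining cases.
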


\begin{theorem}\label{entropy inequality ha hv}
Let $\mathcal{R}$ be an IFS, $\hat{\nu} \in \mathcal{H}(\mathcal{R})$,
$d\hat{\nu}(x,i)=d\nu_{x}(i)d\nu(x)$ a disintegration of $\hat{\nu}$, with
respect to $T$ and $\mathcal{R}_{ {q}}=(X, {\tau}, {q})$ the IFSw with $q_i(x)=\nu_{x}(i)$
for all $x\in X$ and $i=0,\ldots,n-1$. Then
\begin{enumerate}
\item $0\leq h_a(\hat{\nu}) \leq h_v(\hat{\nu}) \leq \ln n$;
\item if there exists some optimal function $g^*$, with respect to $\mathcal{R}_{ {q}}$,
then
\[
h_a(\hat{\nu})
=
h_v(\hat{\nu})
=
\int_{X} \ln \frac{B_{1}(g^*)}{ g^* }  d\nu,
\]

\end{enumerate}

\end{theorem}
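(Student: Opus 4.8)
The plan is to prove both statements simultaneously by extracting a single pointwise inequality from Lemma \ref{entropy inequality ln} and integrating it against $\nu$, with the holonomic identity from Section \ref{holon measures} collapsing the cross term. First I would fix an arbitrary positive $g\in C(X,\mathbb{R})$ and, for each $x\in X$, introduce $b_i(x)\equiv g(\tau_i(x))/B_1(g)(x)$. Since $B_1(g)(x)=\sum_{i=0}^{n-1} g(\tau_i(x))$, the vector $(b_0(x),\ldots,b_{n-1}(x))$ is a probability vector, and $(q_0(x),\ldots,q_{n-1}(x))$ is one as well because $\nu_x$ is a probability measure on $\{0,\ldots,n-1\}$. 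Applying Lemma \ref{entropy inequality ln} with $a_i=q_i(x)$ and $b_i=b_i(x)$ gives the pointwise bound $-\sum_i q_i(x)\ln q_i(x)\leq -\sum_i q_i(x)\ln b_i(x)$, and expanding the logarithm on the right produces $\ln B_1(g)(x)-\sum_i q_i(x)\ln g(\tau_i(x))$.

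Next I would integrate this inequality over $X$ against $\nu$. The crucial step is the cross term $\int_X \sum_i q_i(x)\ln g(\tau_i(x))\,d\nu(x)$: because $\hat{\nu}$ is holonomic and $q_i(x)=\nu_x(i)$, the disintegration identity derived in Section \ref{holon measures}, applied to the continuous function $\ln g$, collapses this term to $\int_X \ln g(x)\,d\nu(x)$. After this substitution the right-hand side is exactly $\int_X \ln(B_1(g)/g)\,d\nu$, so we obtain $h_a(\hat{\nu})\leq \int_X \ln(B_1(g)/g)\,d\nu$ for every admissible $g$; taking the infimum over $g$ yields $h_a(\hat{\nu})\leq h_v(\hat{\nu})$. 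The remaining bounds are immediate: $0\leq h_a(\hat{\nu})$ holds since $-t\ln t\geq 0$ on $[0,1]$ and $\nu$ is a probability measure, while $h_v(\hat{\nu})\leq \ln n$ follows by testing the infimum with the constant function $g\equiv 1$, for which $B_1(g)/g\equiv n$.

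For the second statement I would observe that optimality of $g^*$ means precisely $q_i(x)=g^*(\tau_i(x))/B_1(g^*)(x)=b_i^*(x)$, which is exactly the equality case of Lemma \ref{entropy inequality ln}. Running the computation above with $g=g^*$ therefore produces equality throughout, giving $h_a(\hat{\nu})=\int_X \ln(B_1(g^*)/g^*)\,d\nu$. Since $h_v$ is the infimum of this same functional, $h_v(\hat{\nu})\leq \int_X \ln(B_1(g^*)/g^*)\,d\nu = h_a(\hat{\nu})$, and combined with the inequality $h_a\leq h_v$ from part (1) this forces all three quantities to coincide.

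The only genuinely delicate point is the use of the holonomic identity to rewrite the cross term; everything else is bookkeeping around the entropy inequality. I expect the main thing to verify carefully is that $\ln g$ is a legitimate test function for that identity, which holds because $g$ is strictly positive and continuous on the compact space $X$, so that $\ln g\in C(X,\mathbb{R})$ and the disintegration identity of Section \ref{holon measures} applies verbatim.
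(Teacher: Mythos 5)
Your proposal is correct and follows essentially the same route as the paper: the same application of Lemma \ref{entropy inequality ln} to the probability vectors $q_i(x)$ and $g(\tau_i(x))/B_1(g)(x)$, the same collapse of the cross term via the holonomic identity $\int_X B_{q}(\ln g)\,d\nu=\int_X \ln g\,d\nu$, and the same equality-case argument for the optimal function $g^*$. Your explicit check that $\ln g\in C(X,\mathbb{R})$ is a legitimate test function is a detail the paper leaves implicit, but the argument is the same.
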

\begin{proof}
	We first prove item $\mathit{1}$. Since $0 \leq q_{i}(x)\equiv \nu_{x}(i) \leq 1$
	follows from the definition of average entropy that $h_a(\hat{\nu}) \geq 0$.
	
	From the definition of variational entropy we obtain
	\[
	h_v(\hat{\nu})
	=
	\inf_{ \substack{ g\in C(X, \mathbb{R}) \\ g>0  } }
	\left\{ \int_{X} \ln \frac{B_{1}(g)}{ g }  d\nu \right\}
	\leq
	\int_{X} \ln \frac{B_{1}(1)}{1}  d\nu= \ln n.
	\]

To finish the proof of item 1 remains to show that $h_a(\hat{\nu}) \leq h_v(\hat{\nu})$.
Let $g:X\to\mathbb{R}$ be continuous positive function and
define for each $x\in X$ a probability vector $(p_0(x),\ldots,p_{n-1}(x))$,
where $p_j(x)=g(\tau_j(x))/B_{1}(g)(x)$, for each $j=0,\ldots,n-1$.
From Lemma  \ref{entropy inequality ln}
and the properties of the holonomic measures
we get the following inequalities for any continuous and positive function $g$
	\begin{align}\label{desigualdade-ha-otimal}
	h_a(\hat{\nu})
	&=
	- \int_{X} \sum_{j=0}^{n-1} q_j(x) \ln( q_j(x)) d\nu
	\leq
	- \int_{X} \sum_{j=0}^{n-1} q_j \ln\left( \frac{g\circ\tau_j}{B_{1}(g)}\right) d\nu
	\nonumber
	\\[0.2cm]
	&=
	- \int_{X}
		\left[
			\sum_{j=0}^{n-1} q_j \ln (g\circ\tau_j) -
			\sum_{j=0}^{n-1} q_j \ln (B_{1}(g))
		\right]
	  d\nu
	\nonumber
	\\[0.2cm]
	&=
	- \int_{X} B_{ {q}}(\ln g) \,  d\nu +
	\int_{X} \ln (B_{1}(g)) d\nu
	\nonumber
	\\
	&=
	- \int_{X}  \ln g\,  d\nu + \int_{X}\ln (B_{1}(g))\, d\nu
	\nonumber
	\\[0.3cm]
	&=
	\int_{X} \ln \frac{B_{1}(g)}{g}\, d\nu,
	\end{align}
	Therefore
	\[
	h_a(\hat{\nu})
	\leq
	\inf_{ \substack{ g\in C(X, \mathbb{R}) \\ g>0  } }
	\left\{ \int_{X} \ln \frac{B_{1}(g)}{g}\,  d\nu \right\}
	=
	h_v(\hat{\nu})
	\]
	and the item 1 is proved.
	
	Proof of  item $\mathit{2.}$
	From Lemma \ref{entropy inequality ln}
	it follows that the equality in \eqref{desigualdade-ha-otimal} is attained
	for any optimal function with respect to $\mathcal{R}_{ {q}}$.
	Since we are assuming the existence of at least one optimal function $g^*$
	we have
	\[
	h_a(\hat{\nu}) =  \int_{X} \ln \frac{B_{1}(g^*)}{ g^* }  d\nu
	\geq
	h_v(\hat{\nu}).
	\]
	Since the reverse inequality is always valid we are done.
\end{proof}

We now introduce the natural generalization
of the concept of topological pressure of a continuous
potential.

\begin{definition}\label{Pressure}
Let $\psi:X \to \mathbb{R}$ be a positive continuous function and
$\mathcal{R}_{  \psi}\equiv (X,   \tau,  {\psi\!*\!\tau} )$ an IFSw,
where the weights are give by $(\psi\!*\!\tau)_{i}(x)\equiv \psi(\tau_i(x))$.
The topological pressure  of $\psi$, with respect to the IFSw
$\mathcal{R}_{  \psi}$, is defined by the following expression
\[
P(\psi)
\equiv
\sup_{\hat{\nu} \in \mathcal{H}(\mathcal{R})}
\
\inf_{ \substack{ g\in C(X, \mathbb{R}) \\ g>0  } }
\left\{ \int_{X} \ln \frac{B_{  {\psi*\tau}   }(g)}{ g }  d\nu \right\},
\]
where $d\nu_{x}(i)d\nu(x)=d\hat{\nu}(x,i)$ is a disintegration of $\hat{\nu}$, with
respect to $\mathcal{R}$.
\end{definition}

\begin{lemma}\label{basic pressure inequality}
Let $\psi:X \to \mathbb{R}$ be a positive continuous function and
$\mathcal{R}_{  \psi}\equiv (X,   \tau,  {\psi\!*\!\tau} )$ the IFSw
above defined. Then the topological pressure of $\psi$ is alternatively given by
\[
P(\psi)=
\sup_{\hat{\nu} \in \mathcal{H}(\mathcal{R})}
\left\{ h_{v}(\hat{\nu})+  \int_{X} \ln \psi\,  d\nu\right\}.
\]
\end{lemma}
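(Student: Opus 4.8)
The plan is to show that the inner infimum appearing in Definition \ref{Pressure} can be rewritten, for each fixed holonomic measure $\hat{\nu}$, as the sum $h_v(\hat{\nu}) + \int_X \ln\psi\, d\nu$. Once this is established for every $\hat{\nu} \in \mathcal{H}(\mathcal{R})$, taking the supremum over $\hat{\nu}$ on both sides immediately yields the claimed formula. The key observation is that the transfer operator $B_{\psi*\tau}$ and the unweighted operator $B_1$ are related by an elementary factorization of the weight $\psi(\tau_i(x))$ inside the logarithm.

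The main computation I would carry out is the following. Fix $\hat{\nu} \in \mathcal{H}(\mathcal{R})$ with disintegration $d\hat{\nu}(x,i) = d\nu_x(i)\,d\nu(x)$, and fix a positive $g \in C(X,\mathbb{R})$. By the definition of the weights $(\psi*\tau)_i(x) = \psi(\tau_i(x))$, I first write
\[
B_{\psi*\tau}(g)(x) = \sum_{i=0}^{n-1} \psi(\tau_i(x))\, g(\tau_i(x)).
\]
The natural move is to bundle $\psi \cdot g$ together: setting $\tilde{g} = \psi g$ (again a positive continuous function), one has $B_{\psi*\tau}(g) = B_1(\psi g) = B_1(\tilde g)$. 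Then
\[
\int_X \ln \frac{B_{\psi*\tau}(g)}{g}\, d\nu
=
\int_X \ln \frac{B_1(\tilde g)}{\tilde g}\, d\nu
+
\int_X \ln \psi \, d\nu,
\]
where in the second term I have used $\tilde g / g = \psi$. Taking the infimum over positive continuous $g$ is equivalent to taking the infimum over positive continuous $\tilde g$, since $g \mapsto \psi g$ is a bijection of the cone of positive continuous functions onto itself (because $\psi$ is continuous and strictly positive on the compact space $X$). The term $\int_X \ln\psi\, d\nu$ does not depend on $g$, so it passes outside the infimum, and the remaining infimum of $\int_X \ln (B_1(\tilde g)/\tilde g)\, d\nu$ over positive $\tilde g$ is exactly $h_v(\hat{\nu})$ by Definition \ref{entropy}.

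The step that requires a little care, and which I expect to be the only genuine obstacle, is justifying that the change of variable $g \mapsto \psi g$ is a genuine bijection of $\{g \in C(X,\mathbb{R}) : g > 0\}$ onto itself, so that the two infima truly coincide. This is where positivity and continuity of $\psi$ together with compactness of $X$ are used: $\psi$ is bounded away from zero, so $\psi g$ is positive and continuous whenever $g$ is, and conversely $\tilde g / \psi$ recovers $g$; hence no minimizing sequence is lost in either direction. Combining this bijection with the additive splitting above gives $\inf_{g>0}\int_X \ln(B_{\psi*\tau}(g)/g)\,d\nu = h_v(\hat{\nu}) + \int_X \ln\psi\,d\nu$ for each $\hat{\nu}$, and taking the supremum over $\mathcal{H}(\mathcal{R})$ completes the proof.
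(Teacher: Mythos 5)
Your proposal is correct and takes essentially the same route as the paper's proof: both rest on the identity $B_{\psi*\tau}(g)=B_{1}(\psi g)$ and the change of variable $\tilde{g}=\psi g$, which is a bijection of the cone of positive continuous functions, so that the inner infimum in Definition \ref{Pressure} splits as $h_{v}(\hat{\nu})+\int_{X}\ln\psi\,d\nu$ for each fixed $\hat{\nu}$. The only cosmetic difference is that the paper adds and subtracts $\int_{X}\ln\psi\,d\nu$ inside the infimum, whereas you factor $\ln\psi$ out of the logarithm directly; the substance, including the bijection justification you flag as the delicate step, is identical.
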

\begin{proof} To get this identity we only need to use the pressure's definition and the
	basic properties of the transfer operator as follows
	\begin{align*}
	P(\psi)
	&\equiv
	\sup_{\hat{\nu} \in \mathcal{H}(\mathcal{R})}\
	\inf_{ \substack{ g\in C(X, \mathbb{R}) \\ g>0  } }\
	\left\{ \int_{X} \ln \frac{B_{  {\psi*\tau}}(g)}{ g }\  d\nu \right\}
	\\[0.3cm]
	&=
	\sup_{\hat{\nu} \in \mathcal{H}(\mathcal{R})}\
	\inf_{ \substack{ g\in C(X, \mathbb{R}) \\ g>0  } }\
	\left\{\int_{X} \ln \psi\,  d\nu -   \int_{X} \ln \psi\,  d\nu+
	\int_{X} \ln \frac{B_{  {\psi*\tau}}(g)}{g}\,  d\nu \right\}
	\\[0.3cm]
	&=
	\sup_{\hat{\nu} \in \mathcal{H}(\mathcal{R})}
	\left\{
		\int_{X} \ln \psi \, d\nu +
		\inf_{ \substack{ g\in C(X, \mathbb{R}) \\ g>0  } }
		\int_{X} \ln \frac{B_{  {\psi*\tau}}(g)}{ \psi g } \, d\nu
	\right\}
	\\[0.3cm]
	&=
	\sup_{\hat{\nu} \in \mathcal{H}(\mathcal{R})}
	\left\{
		\int_{X} \ln \psi \, d\nu +
		\inf_{ \substack{ \tilde{g}\in C(X, \mathbb{R}) \\ \tilde{g}>0  } }
		\int_{X} \ln \frac{B_{1}(\tilde{g})}{ \tilde{g} } \, d\nu \right\}
	\text{, where } \psi g = \tilde{g}
	\\[0.3cm]
	&=
	\sup_{\hat{\nu} \in \mathcal{H}(\mathcal{R})}
	\left\{\int_{X} \ln \psi \, d\nu +  h_{v}(\hat{\nu}) \right\}.
	\end{align*}
\end{proof}

\begin{definition}[Equilibrium States]
Let $\mathcal{R}$ be an IFS and $\hat{\mu} \in \mathcal{H}(\mathcal{R})$.
We say that the holonomic measure $\hat{\mu}$ is an equilibrium state for
$\psi$ if
\[
h_{v}(\hat{\mu})+  \int_{X} \ln( \psi(x))\, d\mu(x) = P(\psi).
\]
\end{definition}

\begin{lemma}
Let $X$ and $Y$ compact separable metric spaces and $T:Y\to X$ a continuous mapping.
Then the push-forward mapping
$\Phi_{T}\equiv \Phi:\mathscr{M}_{1}(Y)\to \mathscr{M}_{1}(X)$ given by
\[
\Phi(\hat{\mu})(A)= \hat{\mu}(T^{-1}(A)),
\quad\text{where}\ \hat{\mu}\in \mathscr{M}_{1}(Y)\ \text{and} \ A\in \mathscr{B}(X)
\]
is weak-$*$ to weak-$*$ continuous.
\end{lemma}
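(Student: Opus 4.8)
The plan is to reduce everything to the change-of-variables (image measure) formula together with the metrizability of the weak-$*$ topology. First I would record the elementary structural facts: since $T$ is continuous it is Borel measurable, so $T^{-1}(A)\in\mathscr{B}(Y)$ for every $A\in\mathscr{B}(X)$ and $\Phi(\hat{\mu})$ is a well-defined Borel measure; moreover $\Phi(\hat{\mu})(X)=\hat{\mu}(T^{-1}(X))=\hat{\mu}(Y)=1$, so $\Phi$ indeed maps $\mathscr{M}_1(Y)$ into $\mathscr{M}_1(X)$.

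The key identity I would establish is the change-of-variables formula
\[
\int_{X} f \, d\Phi(\hat{\mu})
=
\int_{Y} (f\circ T)\, d\hat{\mu},
\qquad \forall\, f\in C(X,\mathbb{R}),\ \hat{\mu}\in\mathscr{M}_1(Y).
\]
This holds by definition for indicator functions $f=\mathbf{1}_A$ (it is precisely $\Phi(\hat{\mu})(A)=\hat{\mu}(T^{-1}(A))$), extends to simple functions by linearity, and then to every bounded measurable $f$ by the usual monotone/dominated approximation argument; in particular it holds for all $f\in C(X,\mathbb{R})$.

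With this in hand the continuity is immediate. Because $X$ and $Y$ are compact metric, the spaces $\mathscr{M}_1(X)$ and $\mathscr{M}_1(Y)$ are compact and metrizable in the weak-$*$ topology, so it suffices to check sequential continuity. Suppose $\hat{\mu}_k\rightharpoonup\hat{\mu}$ in $\mathscr{M}_1(Y)$ and fix any $f\in C(X,\mathbb{R})$. Since $T$ is continuous, the composition $f\circ T$ belongs to $C(Y,\mathbb{R})$ and hence is an admissible test function for weak-$*$ convergence on $Y$. Applying the change-of-variables formula twice gives
\[
\int_{X} f\, d\Phi(\hat{\mu}_k)
=
\int_{Y} (f\circ T)\, d\hat{\mu}_k
\;\xrightarrow[k\to\infty]{}\;
\int_{Y} (f\circ T)\, d\hat{\mu}
=
\int_{X} f\, d\Phi(\hat{\mu}),
\]
where the middle convergence is exactly the definition of $\hat{\mu}_k\rightharpoonup\hat{\mu}$ evaluated at $f\circ T\in C(Y,\mathbb{R})$. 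Since $f\in C(X,\mathbb{R})$ was arbitrary, this shows $\Phi(\hat{\mu}_k)\rightharpoonup\Phi(\hat{\mu})$, proving that $\Phi$ is weak-$*$ to weak-$*$ continuous.

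There is no genuine obstacle here; the only point requiring care is the justification of the change-of-variables formula for continuous test functions, and the mild observation that metrizability of the weak-$*$ topology on these spaces of probability measures (guaranteed by compactness of $X$ and $Y$) permits the reduction to sequences. Everything else is a direct application of the continuity of $T$ and the definition of weak-$*$ convergence.
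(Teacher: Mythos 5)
Your proof is correct and follows essentially the same route as the paper's: metrizability of the weak-$*$ topologies (from compactness and separability of $X$ and $Y$) reduces the problem to sequential continuity, which is then immediate from the change-of-variables identity $\int_X f\, d\Phi(\hat{\mu}) = \int_Y (f\circ T)\, d\hat{\mu}$ applied to test functions $f\circ T \in C(Y,\mathbb{R})$. The only difference is that you spell out the approximation argument behind the change-of-variables formula, which the paper simply cites as known.
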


\begin{proof}
Since we are assuming that $X$ and $Y$ are separable compact metric spaces then
we can ensure that the weak-$*$ topology of both $\mathscr{M}_{1}(Y)$ and $\mathscr{M}_{1}(X)$
are metrizable. Therefore is enough to prove that $\Phi$ is sequentially continuous.
Let $(\hat{\mu}_n)_{n\in\mathbb{N}}$ be a sequence
in $\mathscr{M}_{1}(Y)$ so that $\hat{\mu}_n\rightharpoonup \hat{\mu}$.
For any continuous real function $f:X\to\mathbb{R}$
we have from change of variables theorem that
\[
\int_{X} f\, d[\Phi(\hat{\mu}_n)]
=
\int_{Y} f\circ T\, d\hat{\mu}_n,
\]
for any $n\in\mathbb{N}$. From the definition
of the weak-$*$ topology follows that the rhs above
converges when $n\to\infty$ and we have
\[
\lim_{n\to\infty}\int_{X} f\, d[\Phi(\hat{\mu}_n)]
=
\lim_{n\to\infty}\int_{Y} f\circ T\, d\hat{\mu}_n
=
\int_{Y} f\circ T\, d\hat{\mu}
=
\int_{X} f\, d[\Phi(\hat{\mu})].
\]
The last equality shows that $\Phi(\hat{\mu}_n)\rightharpoonup \Phi(\hat{\mu})$
and consequently the weak-$*$ to weak-$*$ continuity of $\Phi$.
\end{proof}

For any $\hat{\nu}\in\mathcal{H}(\mathcal{R})$ it is always possible to disintegrate
it as $d\hat{\nu}(x,i)= d\nu_{x}(i)d[\Phi(\hat{\nu})](x)$, where $\Phi(\hat{\nu})\equiv \nu$ is
the probability measure on $\mathscr{B}(X)$, defined for any $A\in\mathscr{B}(X)$ by
\begin{align}\label{def-Phi}
\nu(A)\equiv \Phi(\hat{\nu})(A) \equiv  \hat{\nu}(T^{-1}(A)),
\end{align}
where
$T:\Omega\to X$ is the cannonical projection of the first coordinate.
This observation together with the previous lemma allow us to
define a continuous mapping from $\mathcal{H}(\mathcal{R})$
to $\mathscr{M}_{1}(X)$ given by $\hat{\nu}\longmapsto \Phi(\hat{\nu})\equiv \nu$.

We now prove a theorem ensuring the existence of equilibrium states for
any continuous positive function $\psi$.
Although this theorem has clear and elegant proof and works
in great generality it has the disadvantage of
providing no description of the set of equilibrium states.

\begin{theorem}[Existence of Equilibrium States]
	Let $\mathcal{R}$ be an IFS and $\psi:X\to\mathbb{R}$ a positive continuous function.
	Then the set of equilibrium states for $\psi$ is not empty.
\end{theorem}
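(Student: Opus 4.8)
The plan is to read off from Lemma~\ref{basic pressure inequality} that
\[
P(\psi)=\sup_{\hat{\nu}\in\mathcal{H}(\mathcal{R})}F(\hat{\nu}),
\qquad
F(\hat{\nu})\equiv h_{v}(\hat{\nu})+\int_{X}\ln\psi\,d\nu ,
\]
so that a measure is an equilibrium state precisely when it maximizes $F$ over $\mathcal{H}(\mathcal{R})$. Since the set of holonomic probability measures is nonempty, convex and weak-$*$ compact, the whole problem reduces to showing that $F$ is upper semicontinuous on $\mathcal{H}(\mathcal{R})$; the classical fact that an upper semicontinuous function on a nonempty compact space attains its supremum then finishes the argument.

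To establish upper semicontinuity I would treat the two summands of $F$ separately. For the linear term, since $\psi$ is positive and continuous we have $\ln\psi\in C(X,\mathbb{R})$, and the push-forward map $\hat{\nu}\mapsto\nu=\Phi(\hat{\nu})$ is weak-$*$ continuous by the preceding lemma; hence $\hat{\nu}\mapsto\int_{X}\ln\psi\,d\nu$ is weak-$*$ continuous. For the entropy term I would exploit its variational definition. For each fixed positive $g\in C(X,\mathbb{R})$ the function $\ln\bigl(B_{1}(g)/g\bigr)$ is continuous on $X$, because $B_{1}(g)=\sum_{i=0}^{n-1} g\circ\tau_{i}$ is continuous and strictly positive, so the functional $\hat{\nu}\mapsto\int_{X}\ln\bigl(B_{1}(g)/g\bigr)\,d\nu$ is weak-$*$ continuous. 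By Definition~\ref{entropy}, $h_{v}(\hat{\nu})$ is the infimum over all such $g$ of these continuous functionals, and an infimum of a family of continuous functions is always upper semicontinuous. Consequently $F$, being the sum of a continuous functional and an upper semicontinuous one, is upper semicontinuous.

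With this in hand the conclusion is immediate: $F$ is upper semicontinuous on the nonempty compact space $\mathcal{H}(\mathcal{R})$, so it attains its supremum at some $\hat{\mu}\in\mathcal{H}(\mathcal{R})$, and this $\hat{\mu}$ satisfies $h_{v}(\hat{\mu})+\int_{X}\ln\psi\,d\mu=P(\psi)$, i.e.\ it is an equilibrium state. The only point requiring genuine care---rather than a purely formal verification---is the upper semicontinuity of $h_{v}$: one must check that each functional in the defining infimum is genuinely weak-$*$ continuous, which rests on the continuity of $\Phi$ and on $B_{1}(g)$ being continuous and bounded away from zero for every admissible $g$. Once this is secured, no further regularity of the IFS or of $\psi$ is needed, which is exactly what makes the statement hold in full generality.
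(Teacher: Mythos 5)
Your proposal is correct and follows essentially the same route as the paper: both arguments use the weak-$*$ continuity of the push-forward $\Phi$ to make each functional $\hat{\nu}\mapsto\int_{X}\ln\bigl(B_{1}(g)/g\bigr)\,d\nu$ continuous, conclude that $h_{v}$ is upper semicontinuous as an infimum of continuous functionals, add the continuous term $\int_{X}\ln\psi\,d\nu$, and invoke compactness of $\mathcal{H}(\mathcal{R})$ to attain the supremum (the paper cites the Bauer maximum principle where you use the standard Weierstrass-type result, a purely cosmetic difference).
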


\begin{proof}
As we observed above we can define a weak-$*$ to weak-$*$ continuous mapping
\[
\mathcal{H}(\mathcal{R})\ni \hat{\nu}\longmapsto \nu\in \mathscr{M}_{1}(X),
\]
where $d\hat{\nu}(x,i)=d\nu_{x}(i)d\nu(x)$ is the above constructed
disintegration of $\hat{\nu}$.
From this observation follows that
for any fixed positive continuous $g$ we have that the mapping
$
\mathcal{H}(\mathcal{R})\ni\hat{\nu}
\longmapsto
\int_X \ln (B_{1}(g)/ g) \, d\nu
$
is continuous with respect to the weak-$*$ topology. Therefore
the mapping
\[
\mathcal{H}(\mathcal{R})\ni\hat{\nu}
\longmapsto
\inf_{ \substack{ g\in C(X, \mathbb{R}) \\ g>0  } }
\left\{ \int_{X} \ln \frac{B_{1}(g)}{g}\,  d\nu \right\}
\equiv
h_v(\hat{\nu}).
\]
is upper semi-continuous (USC) which implies by standard results that
the following mapping is also USC
\[
\mathcal{H}(\mathcal{R})\ni\hat{\nu}
\longmapsto
h_{v}(\hat{\nu})+  \int_{X} \ln( \psi(x))\, d\nu(x).
\]
Since $\mathcal{H}(\mathcal{R})$ is compact in the weak-$*$ topology and
the above mapping is USC then follows from Bauer maximum principle that
this mapping attains its supremum at some $\hat{\mu}\in \mathcal{H}(\mathcal{R})$, i.e.,
\[
\sup_{\hat{\nu} \in \mathcal{H}(\mathcal{R})}
\left\{\int_{X} \ln \psi \, d\nu +  h_{v}(\hat{\nu}) \right\}
=
\int_{X} \ln \psi \, d\mu +  h_{v}(\hat{\mu})
\]
thus proving the existence of at least one equilibrium state.
\end{proof}

\subsection{Pressure Differentiability and Equilibrium States}

In this section we consider the functional $p:C(X,\mathbb{R})\to \mathbb{R}$
given by
\begin{align}\label{def-funcional-p}
p(\varphi) = P(\exp(\varphi)).
\end{align}
It is immediate to verify
that $p$ is convex and finite valued functional.
We say that a Borel signed measure $\nu\in\mathscr{M}_{s}(X)$ is a
{\bf subgradient} of $p$ at $\varphi$ if it satisfies the following
subgradient inequality $p(\eta)\geq p(\varphi)+\nu(\eta-\varphi)$.
The set of all subgradients at $\varphi$ is called {\bf subdifferential} of $p$
at $\varphi$ and denoted by $\partial p(\varphi)$.
It is well-known that if $p$ is a continuous mapping
then $\partial p(\varphi)\neq \emptyset$ for any $\varphi\in C(X,\mathbb{R})$.

We observe that for any pair $\varphi,\eta\in C(X,\mathbb{R})$
and $0<t<s$, follows from the convexity of $p$ the following inequality
$
s( p(\varphi+t\eta)-p(\varphi))\leq t(p(\varphi+s\eta)-p(\varphi)).
$
In particular, the one-sided directional derivative
$d^{+}p(\varphi):C(X,\mathbb{R})\to \mathbb{R}$ given by
\[
d^{+}p(\varphi)(\eta)
=
\lim_{t\downarrow 0} \frac{p(\varphi+t\eta)-p(\varphi)}{t}
\]
is well-defined for any $\varphi\in C(X,\mathbb{R})$.

\begin{theorem} \label{teo-gateaux-unicidade}
For any fixed $\varphi\in C(X,\mathbb{R})$ we have
\begin{enumerate}
\item
the signed measure $\nu\in\partial p(\varphi)$ iff
$\nu(\eta)\leq d^{+}p(\varphi)(\eta)$ for all $\eta\in C(X,\mathbb{R})$;

\item
the set $\partial p(\varphi)$ is a singleton iff $d^{+}p(\varphi)$
is the G\^ateaux derivative of $p$ at $\varphi$.
\end{enumerate}
\end{theorem}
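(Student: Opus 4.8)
The plan is to treat this as a standard exercise in convex analysis, exploiting the fact (established just above in the excerpt) that $p$ is convex and finite-valued, so that the difference quotient $t\mapsto (p(\varphi+t\eta)-p(\varphi))/t$ is nondecreasing and the one-sided derivative $d^{+}p(\varphi)(\eta)$ exists in every direction $\eta$. Before starting either item I would record the two structural facts about $q:=d^{+}p(\varphi)$ that do all the work: $q$ is \emph{sublinear}, meaning positively homogeneous ($q(\lambda\eta)=\lambda q(\eta)$ for $\lambda>0$, immediate from the definition) and subadditive ($q(\eta_{1}+\eta_{2})\le q(\eta_{1})+q(\eta_{2})$, obtained by applying convexity of $p$ to the midpoint of $\varphi+2t\eta_{1}$ and $\varphi+2t\eta_{2}$ and letting $t\downarrow 0$). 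I would also note that continuity of $p$ makes $q$ bounded, $|q(\eta)|\le L\|\eta\|$, so every linear functional dominated by $q$ is continuous and hence identifies with an element of $\mathscr{M}_{s}(X)$.

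For item 1 the forward implication is a one-line consequence of the subgradient inequality: if $\nu\in\partial p(\varphi)$, then evaluating that inequality at the point $\varphi+t\eta$ gives $(p(\varphi+t\eta)-p(\varphi))/t\ge\nu(\eta)$ for every $t>0$, and letting $t\downarrow 0$ yields $\nu(\eta)\le d^{+}p(\varphi)(\eta)$. For the converse I would use the monotonicity inequality already displayed in the excerpt with $s=1$: for $0<t<1$ the quotient at $t$ is bounded above by $p(\varphi+\eta)-p(\varphi)$, so $d^{+}p(\varphi)(\zeta)\le p(\varphi+\zeta)-p(\varphi)$ for all $\zeta$; combining this with the hypothesis $\nu(\zeta)\le d^{+}p(\varphi)(\zeta)$ and substituting $\zeta=\eta-\varphi$ recovers exactly the subgradient inequality $p(\eta)\ge p(\varphi)+\nu(\eta-\varphi)$.

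For item 2, item 1 identifies $\partial p(\varphi)$ with the set of continuous linear functionals dominated by the sublinear $q$. The easy direction is when $d^{+}p(\varphi)$ is the G\^ateaux derivative: then $q$ is linear, and any $\nu\le q$ satisfies both $\nu(\eta)\le q(\eta)$ and, replacing $\eta$ by $-\eta$, $\nu(\eta)\ge q(\eta)$, forcing $\nu=q$, so $\partial p(\varphi)=\{q\}$ is a singleton. The reverse direction is the crux, and I would argue it by contraposition. The key algebraic lemma is that a sublinear functional is linear if and only if it is odd, that is $q(\eta)=-q(-\eta)$ for all $\eta$ (subadditivity always gives $q(\eta)\ge -q(-\eta)$, and oddness upgrades subadditivity to additivity). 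So if $q$ is not linear there is a direction $\eta_{0}$ with $-q(-\eta_{0})<q(\eta_{0})$. On the line $\mathbb{R}\eta_{0}$ I would define two linear functionals taking the values $q(\eta_{0})$ and $-q(-\eta_{0})$ at $\eta_{0}$, check that each is dominated by $q$ on that line (using positive homogeneity and the inequality $q(\eta_{0})+q(-\eta_{0})\ge 0$), and extend each by the Hahn--Banach theorem to a linear functional on $C(X,\mathbb{R})$ dominated by $q$. By item 1 both extensions lie in $\partial p(\varphi)$, and they are distinct because they disagree at $\eta_{0}$, contradicting the singleton hypothesis. Hence $q$ is linear; being linear and continuous it is a signed measure, and the two-sided directional limit then exists and equals $q$ in every direction, which is precisely the assertion that $d^{+}p(\varphi)$ is the G\^ateaux derivative.

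The main obstacle is the reverse direction of item 2: all the content is concentrated in the Hahn--Banach separation step, together with the verification that the two functionals built on $\mathbb{R}\eta_{0}$ are genuinely dominated by $q$ (which hinges on the sublinearity facts recorded at the outset) and that the resulting extensions are continuous. The latter is guaranteed precisely because $q$ is continuous, equivalently because $p$ is continuous, so that a dominated extension is automatically bounded and therefore represented by an element of $\mathscr{M}_{s}(X)$.
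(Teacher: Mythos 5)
Your argument is correct, but note that it takes a different form from the paper's ``proof,'' which contains no argument at all: the paper simply cites Theorem 7.16 and Corollary 7.17 of Aliprantis--Border \cite{MR2378491}, and those two results are verbatim the two items of the statement. What you have written is essentially a reconstruction of the standard convex-analysis proof of the cited facts: monotone difference quotients give both directions of item 1; for item 2 you view $q:=d^{+}p(\varphi)$ as a sublinear functional, characterize linearity of a sublinear functional by oddness, and, when oddness fails at some $\eta_{0}$, build two distinct linear functionals on $\mathbb{R}\eta_{0}$ dominated by $q$ and extend them by Hahn--Banach to get two distinct subgradients. All of these steps check out (the domination of both functionals on the line reduces, as you say, to $q(\eta_{0})+q(-\eta_{0})\geq q(0)=0$), so your proposal buys self-containedness where the paper outsources the content to a reference.

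One point you should make airtight: you invoke continuity of $p$ to conclude $|q(\eta)|\leq L\|\eta\|$, and this is what guarantees that linear functionals dominated by $q$ (in particular your Hahn--Banach extensions) are bounded and hence representable as elements of $\mathscr{M}_{s}(X)$. Convexity plus finiteness alone does \emph{not} give continuity in infinite dimensions (discontinuous linear functionals are convex and finite-valued), and the paper itself is casual on this point, phrasing it only as ``if $p$ is a continuous mapping.'' In the present setting continuity is genuine but deserves one line: by Lemma~\ref{basic pressure inequality}, $p(\varphi)=P(e^{\varphi})=\sup_{\hat{\nu}\in\mathcal{H}(\mathcal{R})}\{h_{v}(\hat{\nu})+\int_{X}\varphi\, d\nu\}$, a supremum of functionals that are affine and $1$-Lipschitz in $\varphi$ (each $\nu$ is a probability measure) and uniformly bounded above by $\ln n+\|\varphi\|_{\infty}$, so $p$ is finite and globally $1$-Lipschitz. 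With that observation inserted where you first record the boundedness of $q$, your proof is complete; without it, both the boundedness of $q$ and the identification of the extensions with signed measures are unsupported.
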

\begin{proof}
This theorem is consequence of Theorem 7.16 and Corollary 7.17
of the reference \cite{MR2378491}.
\end{proof}

\begin{theorem}
Let $\mathcal{R}$ be an IFS, $\psi:X\to\mathbb{R}$ a positive continuous function
and $\Phi$ defined as in \eqref{def-Phi}.
If the functional $p$ defined on \eqref{def-funcional-p}
is G\^ateaux differentiable at $\varphi\equiv \log \psi$
then
\[
\# \{\Phi(\hat{\mu}):\ \hat{\mu}\ \text{is an equilibrium state for}\ \psi \} =1.
\]
\end{theorem}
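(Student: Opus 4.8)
The plan is to realize the $\Phi$-images of equilibrium states as subgradients of the convex functional $p$, and then invoke the assumed G\^ateaux differentiability to collapse the subdifferential to a single point. First I would rewrite the pressure in the additive form supplied by Lemma \ref{basic pressure inequality}. Setting $\varphi\equiv\log\psi$ (a legitimate element of $C(X,\mathbb{R})$ since $\psi$ is positive and continuous on the compact space $X$), one has
\[
p(\varphi)
=
P(e^{\varphi})
=
\sup_{\hat{\nu}\in\mathcal{H}(\mathcal{R})}
\left\{ h_{v}(\hat{\nu})+\int_{X}\varphi\, d\Phi(\hat{\nu}) \right\},
\]
so that $p$ is exactly the pointwise supremum of the affine functionals $\eta\mapsto h_{v}(\hat{\nu})+\int_{X}\eta\, d\Phi(\hat{\nu})$, which re-confirms that $p$ is convex and finite-valued.

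The key step is the inclusion
\[
\{\Phi(\hat{\mu}):\ \hat{\mu}\ \text{is an equilibrium state for}\ \psi\}
\subseteq
\partial p(\varphi).
\]
To prove it, I would fix an equilibrium state $\hat{\mu}$ and write $\mu\equiv\Phi(\hat{\mu})$, so that, by the definition of equilibrium state and the identity $\ln\psi=\varphi$, we have $h_{v}(\hat{\mu})+\int_{X}\varphi\, d\mu=P(\psi)=p(\varphi)$. For an arbitrary $\eta\in C(X,\mathbb{R})$ the variational expression above gives $p(\eta)\geq h_{v}(\hat{\mu})+\int_{X}\eta\, d\mu$; subtracting the equilibrium identity yields precisely the subgradient inequality $p(\eta)\geq p(\varphi)+\mu(\eta-\varphi)$, whence $\mu\in\partial p(\varphi)$.

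Next I would use the hypothesis together with the results already established. Since $p$ is G\^ateaux differentiable at $\varphi$, Theorem \ref{teo-gateaux-unicidade}, item $\mathit{2}$, tells us that $\partial p(\varphi)$ is a singleton. Combined with the inclusion above, the set of $\Phi$-images of equilibrium states is contained in a one-point set; since the existence theorem established just before guarantees at least one equilibrium state, this set of $\Phi$-images is non-empty, hence has exactly one element, which is the assertion.

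I expect the only delicate points to be bookkeeping ones: confirming that the weak-$*$ continuous push-forward $\Phi$ is applied consistently with the disintegration $d\hat{\nu}(x,i)=d\nu_{x}(i)d\nu(x)$ used to define $h_{v}$, and that $\nu=\Phi(\hat{\nu})$ is the second marginal appearing in the integral $\int_{X}\varphi\, d\nu$. No genuine obstacle arises here, since the real work — the passage from differentiability to uniqueness of the subgradient — is already packaged in Theorem \ref{teo-gateaux-unicidade}; the proof merely has to transport an equilibrium identity into a subgradient inequality and quote that result.
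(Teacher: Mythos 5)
Your proposal is correct and follows essentially the same route as the paper: both arguments realize $\Phi(\hat{\mu})$ for an equilibrium state $\hat{\mu}$ as an element of $\partial p(\varphi)$ via the variational formula of Lemma \ref{basic pressure inequality}, and then use G\^ateaux differentiability together with Theorem \ref{teo-gateaux-unicidade} to conclude that $\partial p(\varphi)$ is a singleton. The only (harmless) differences are that you verify the subgradient inequality $p(\eta)\geq p(\varphi)+\mu(\eta-\varphi)$ directly, whereas the paper first derives the directional bound $\mu(\eta)\leq d^{+}p(\varphi)(\eta)$ and invokes item $\mathit{1}$ of that theorem, and that you explicitly cite the existence theorem to rule out the empty set, a point the paper leaves implicit.
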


\begin{proof}
Suppose that $\hat{\mu}$ is an equilibrium state for $\psi$. Then
we have from the definition of the pressure that
\begin{align*}
p(\varphi+t\eta) -p(\varphi)
&=
P(\psi\exp(t\eta))-P(\psi)
\\
&\geq
h_{v}(\hat{\mu})+\int_{X} \ln \psi\ d\mu + \int_{X} t\eta\, d\mu
-
h_{v}(\hat{\mu})-\int_{X} \ln \psi\ d\mu
\\
&=
t\int_{X} \eta\, d\mu.
\end{align*}
Since we are assuming that $p$ is G\^ateaux differentiable at $\varphi$
follows from the above inequality that $\mu(\eta)\leq d^{+}p(\varphi)(\eta)$
for all $\eta\in C(X,\mathbb{R})$. From this inequality and
Theorem \ref{teo-gateaux-unicidade}
we can conclude  that $\partial p(\varphi)=\{\mu\}$.
Therefore for all equilibrium state $\hat{\mu}$ for $\psi$
we have $\Phi(\hat{\mu})=\partial p(\varphi)$, thus finishing the proof.
\end{proof}

\section{Applications and Constructive Approach to Equilibrium States} \label{applic eq state}

In this section we show how one can construct equilibrium states
using the spectral analysis of the transfer operator. The results
present here are based on Theorem 3.27 of \cite{CO-DynProg17} which is
a result about Dynamic Programming. In order to make its statement
and this section as self-contained as possible we provided here the needed
background.

The Theorem 3.27 mentioned above is a kind of generalized version
of the classical Ruelle-Perron-Frobenius theorem. To be more precisely,
we consider a sequential decision-making process
$S=\{X, A, \xi, f, u, \delta\}$ derived from
$\mathcal{R}_{  q}=(X,   \tau,   q)$ by choosing
$A=\{0, ..., n-1\}$ the action set, $\xi(x)=A$ for any
$x \in X$, $f(x,i)=\tau_{i}(x)$ the dynamics, $u(x,i)= \ln q_i(x)$ and $\delta$
a discount function so that for some increasing function $\gamma:[0,\infty) \to [0,\infty)$
with $\lim_{n \to \infty} \gamma^n(t)=0$
we have $|\delta(t_2) - \delta(t_1)| \leq \gamma(|t_2 - t_1|)$
for any $t_1, t_2 \in \mathbb{R}$.

In this setting we consider a parametric family of discount functions
$\delta_{n}:[0,+\infty) \to \mathbb{R}$, where $\delta_{n}(t) \to I(t)=t$,
pointwise and the normalized limits
$\lim_{n \to \infty} w_{n}(x) -\max w_{n}$ of the fixed points
\[
w_{n}(x)
=
\ln \sum_{i=0}^{n-1} \exp\Big(  u(x,i)+\delta_{n}(w_{n}(f(x,i)))   \Big)
\]
of a variable discount decision-making process $S_n=\{X, A, \psi, f, u, \delta_{n}\}$
defined by a continuous and bounded immediate reward  $u:X \times A \to \mathbb{R}$
and a sequence of discount functions $(\delta_n)_{n\geq0}$,
satisfying the admissibility conditions:
\begin{enumerate}
  \item[$\mathit{1.}$] the contraction modulus  $\gamma_{n}$ of the variable discount $\delta_{n}$ is also a variable discount function;
  \item[$\mathit{2.}$] $\delta_{n}(0)=0$ and $\delta_{n}(t) \leq t$ for any $t\in [0,\infty)$;
  \item[$\mathit{3.}$]  for any fixed $\alpha >0$ we have $\delta_{n}(t +\alpha) - \delta_{n}(t)\to \alpha$, when $n\to \infty$,
  uniformly in $t>0$.
\end{enumerate}

\begin{theorem}[\cite{CO-DynProg17}]\label{spectral radii}
   Let $\mathcal{R}_{  q}=(X,   \tau,   q)$ be an IFSw,
   such that the above defined immediate associated return $u$ satisfy:
   \begin{enumerate}
   	\item $u$ is uniformly $\delta$-bounded;
   	\item $u$ is uniformly $\delta$-dominated.
   \end{enumerate}
   Then there exists a positive and continuous eigenfunction
   $h_{ {q}}$ such that
   $B_{ {q}}(h_{ {q}})=\rho(B_{ {q}}) h_{ {q}}$.
\end{theorem}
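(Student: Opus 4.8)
The plan is to realize the eigenfunction as an exponential of a renormalized limit of the fixed points $w_n$ of the variable discount processes $S_n$, and to read off the eigenvalue from the renormalizing constants. The key algebraic observation is that, since $u(x,i)=\ln q_i(x)$, exponentiating the fixed point equation for $w_n$ and setting $h_n\equiv\exp(w_n)$ gives
\[
h_n(x)=\sum_{i=0}^{n-1} q_i(x)\,\exp\!\big(\delta_n(w_n(\tau_i(x)))-w_n(\tau_i(x))\big)\,h_n(\tau_i(x)),
\]
so that after subtracting $c_n\equiv\max_X w_n$ and letting $n\to\infty$ (where $\delta_n\to I$) the normalized limit $v\equiv\lim_n(w_n-c_n)$ should satisfy $B_q(e^{v})=\rho\,e^{v}$ with $\rho=\exp\big(\lim_n(c_n-\delta_n(c_n))\big)$. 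Thus the candidate eigenfunction is $h_q\equiv e^{v}$.

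First I would invoke the theory of variable discount sequential decision processes, precisely Theorem~3.27 of \cite{CO-DynProg17}, to produce for each $n$ the fixed point $w_n\in C(X,\mathbb{R})$ of $S_n$. This is where the two standing hypotheses enter: the uniform $\delta$-boundedness and uniform $\delta$-domination of $u$, together with the admissibility conditions $\mathit{1}$--$\mathit{3}$ on $(\delta_n)$, are exactly what is needed to turn each $\delta_n$ into a contraction with uniform control, guaranteeing existence, uniqueness and continuity of $w_n$, as well as the uniform bounds that survive the renormalization.

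Next I would normalize, setting $v_n\equiv w_n-c_n\le 0$, and show that the family $(v_n)_{n}$ is uniformly bounded and equicontinuous; the uniform bound is supplied by the $\delta$-domination hypothesis, and equicontinuity by the continuity of $\tau$ and $q$ together with the uniform estimates on $\delta_n$. By Arzel\`a--Ascoli I would pass to a uniformly convergent subsequence $v_{n_k}\to v$, arranging simultaneously that $c_{n_k}-\delta_{n_k}(c_{n_k})\to\ell$ for some finite $\ell$, and set $\rho\equiv e^{\ell}>0$. The main obstacle is the passage to the limit in the fixed point equation: since the argument $w_n(\tau_i(x))=v_n(\tau_i(x))+c_n$ diverges, one cannot use the mere pointwise convergence $\delta_n(t)\to t$; instead the limit is governed by admissibility condition $\mathit{3}$, namely $\delta_n(t+\alpha)-\delta_n(t)\to\alpha$ uniformly in $t$, which is precisely what allows one to replace $\delta_n(c_n+v_n(\tau_i(x)))-\delta_n(c_n)$ by $v(\tau_i(x))$ in the limit. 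Carrying out this estimate yields the strictly positive continuous function $h_q\equiv e^{v}$ with $B_q(h_q)=\rho\,h_q$ for some $\rho>0$.

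Finally, having produced a positive continuous eigenfunction associated to a positive eigenvalue, I would close the argument using the paper's own results: Lemma~\ref{powers description} and Theorem~\ref{Ruelle IFS cont} show that any positive continuous eigenfunction of $B_q$ must be associated to its spectral radius, whence $\rho=\rho(B_q)$ and $B_q(h_q)=\rho(B_q)\,h_q$, as claimed.
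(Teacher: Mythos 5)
The paper never actually proves this statement --- it is quoted, hypotheses and all, from \cite{CO-DynProg17} --- and the only in-paper material to compare against is the approximation scheme set up just before it: the fixed points $w_n$ of the variable-discount processes $S_n$ and their normalized limits $w_n-\max_X w_n$. Your sketch is precisely that scheme carried out: the exponentiation algebra turning the Bellman fixed-point equation into an approximate eigenvalue equation is correct, you correctly identify admissibility condition $\mathit{3}$ (not mere pointwise convergence $\delta_n \to I$) as what permits passing to the limit in the diverging argument $c_n+v_n(\tau_i(x))$, and your closing step --- pinning the eigenvalue to $\rho(B_{q})$ via Lemma~\ref{powers description} --- is valid; so this is essentially the same (intended) approach.
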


As pointed out in \cite{CO-DynProg17},
the hypothesis of this theorem are not so restrictive as it initially looks like.
In fact, a lot of variable discount parametric families satisfies our requirements and,
the uniformly $\delta$-bounded and $\delta$-dominated property are satisfied
for $u$ in the Lipschitz and H\"older spaces provided that the IFS is contractive.
So the hypothesis placed on the variable discount allow us to apply
this theorem for a large class of weights.

\begin{corollary} \label{normalization}
Under the same hypothesis of Theorem~\ref{spectral radii},
given an IFSw $\mathcal{R}_{  q}=(X,   \tau,   q)$
there exists a continuous IFSpdp
$\mathcal{R}_{  p}=(X,   \tau,   p)$
called the normalization of $\mathcal{R}_{  q}$, where
\[
p_j (x)
=
q_j(x)
\frac{ h_{ {q}}(\tau_j(x))}{\rho(B_{ {q}}) \,h_{ {q}}(x)} ,
\; j=0, ..., n-1.
\]
\end{corollary}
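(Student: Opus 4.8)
The plan is to reduce the statement to a single short computation, once the eigenfunction supplied by Theorem~\ref{spectral radii} is in hand. First I would invoke Theorem~\ref{spectral radii}, whose hypotheses are assumed, to produce a strictly positive continuous function $h_{q}:X\to\mathbb{R}$ satisfying $B_{q}(h_{q})=\rho(B_{q})\,h_{q}$. Because $X$ is compact and $h_{q}$ is continuous and strictly positive, it attains a positive minimum, so $h_{q}$ is bounded away from zero; likewise $\rho(B_{q})>0$, since $B_{q}(h_{q})=\rho(B_{q})\,h_{q}$ with both $B_{q}(h_{q})$ and $h_{q}$ strictly positive. These two facts make the candidate weights $p_{j}(x)=q_{j}(x)\,h_{q}(\tau_{j}(x))/(\rho(B_{q})\,h_{q}(x))$ well defined.

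Next I would verify the three defining properties of a continuous IFSpdp. Continuity of each $p_{j}$ follows from the continuity of $q_{j}$, $\tau_{j}$ and $h_{q}$, together with the fact that the denominator $\rho(B_{q})\,h_{q}(x)$ never vanishes. Nonnegativity is immediate from $q_{j}\geq 0$, $h_{q}>0$ and $\rho(B_{q})>0$. The one property carrying content is the normalization, which I would obtain by summing over $j$, factoring out $1/(\rho(B_{q})\,h_{q}(x))$, and recognizing the remaining sum as the transfer operator applied to $h_{q}$:
\[
\sum_{j=0}^{n-1}p_{j}(x)
=
\frac{1}{\rho(B_{q})\,h_{q}(x)}\sum_{j=0}^{n-1}q_{j}(x)\,h_{q}(\tau_{j}(x))
=
\frac{B_{q}(h_{q})(x)}{\rho(B_{q})\,h_{q}(x)}.
\]
Substituting the eigenfunction equation $B_{q}(h_{q})=\rho(B_{q})\,h_{q}$ collapses the right-hand side to $1$ for every $x\in X$, which is exactly the stochasticity required of an IFSpdp.

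I do not expect a genuine obstacle here: all the substantive work has been absorbed into Theorem~\ref{spectral radii}, and what remains is merely the observation that the normalizing change of weights induced by a positive eigenfunction yields a stochastic family. The only points deserving a line of care are that $h_{q}$ is bounded below (so the division is legitimate and $p_{j}$ remains continuous) and that $\rho(B_{q})>0$; both follow at once from compactness of $X$ and strict positivity of $h_{q}$. This is precisely the normalization device already used inside the proof of Lemma~\ref{powers description}, here specialized to $h=h_{q}$ and $\rho=\rho(B_{q})$ and justified globally by the existence of the maximal eigendata.
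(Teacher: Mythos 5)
Your proposal is correct and coincides with the paper's own (essentially unwritten) argument: the paper treats Corollary~\ref{normalization} as an immediate consequence of Theorem~\ref{spectral radii}, via exactly the normalization device it had already used inside the proof of Lemma~\ref{powers description}, with the stochasticity $\sum_j p_j = B_{q}(h_{q})/(\rho(B_{q})h_{q}) = 1$ following from the eigenfunction equation. Your added care about $h_{q}$ being bounded below and $\rho(B_{q})>0$ is a harmless elaboration of the same proof.
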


\begin{theorem}[Variational principle]\label{VarPrinciple}
Let $\psi:X \to \mathbb{R}$ be a positive and continuous function
and $\mathcal{R}_{ {\psi*\tau}}=(X,   \tau,  {\psi*\tau})$ an IFSw.
Assume that $\mathcal{R}_{ {\psi*\tau}}$ satisfies the hypothesis of
Theorem \ref{spectral radii}.
Consider $\mathcal{R}_{  p}=(X,   \tau,   p)$
be the normalization of $\mathcal{R}_{ {\psi*\tau}}$
given by Corollary~\ref{normalization} and
$\hat{\mu}=\Phi(\mu) \in \mathcal{H}(\mathcal{R})$
be the holonomic lifting
of the fixed probability measure $\mu$ of $\mathcal{L}_{ {p}}$.
Then $\hat{\mu}$, is a equilibrium state for $\psi$ and
\begin{align}\label{2formulas-pressao}
P(\psi)
=
\ln \rho(B_{ {\psi*\tau}})
=
\lim_{N \to \infty} \frac{1}{N} \ln\left(B_{ {\psi*\tau}}^N(1) (x) \right),
\end{align}
where $\rho(B_{ {\psi*\tau}})$ is the spectral radius of the transfer
operator $B_{ {\psi*\tau}}$ acting on $C(X,\mathbb{R})$ and
the convergence in the above limit is uniform and independent of the
choice of $x\in X$.
\end{theorem}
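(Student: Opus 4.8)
The plan is to establish both equalities in \eqref{2formulas-pressao} together with the equilibrium property by exhibiting a single test function that simultaneously saturates the upper and lower bounds in the variational formula of Lemma~\ref{basic pressure inequality}. First I would invoke Theorem~\ref{spectral radii} to produce a strictly positive continuous eigenfunction $h$ with $B_{\psi*\tau}(h)=\rho(B_{\psi*\tau})\,h$, writing $\rho\equiv\rho(B_{\psi*\tau})$. The second equality in \eqref{2formulas-pressao}, namely the uniform convergence of $N^{-1}\ln B_{\psi*\tau}^N(1)(x)$ to $\ln\rho$, is then immediate from Lemma~\ref{powers description}, since the hypotheses of that lemma (a positive eigenvalue with a strictly positive continuous eigenfunction) are exactly what Theorem~\ref{spectral radii} supplies. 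So the remaining work is to identify $P(\psi)$ with $\ln\rho$ and to verify the equilibrium property of $\hat\mu$.

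For the upper bound $P(\psi)\le\ln\rho$ I would use the eigenfunction $h$ itself as a test function in the inner infimum of Definition~\ref{Pressure}: since $B_{\psi*\tau}(h)=\rho h$, the ratio $B_{\psi*\tau}(h)/h$ is the constant $\rho$, so for every $\hat\nu\in\mathcal H(\mathcal R)$ the infimum over $g$ is bounded above by $\int_X\ln\rho\,d\nu=\ln\rho$; taking the supremum over $\hat\nu$ then gives $P(\psi)\le\ln\rho$.

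For the lower bound, and simultaneously the equilibrium property, the key observation is that $g^\ast\equiv\psi\cdot h$ is an optimal function for the normalized system $\mathcal R_p$ in the sense of Definition~\ref{Optimal Variational g}. Indeed, unwinding the definition of the normalized weights from Corollary~\ref{normalization}, one computes $B_1(g^\ast)(x)=\sum_j\psi(\tau_j(x))h(\tau_j(x))=B_{\psi*\tau}(h)(x)=\rho\,h(x)$, whence $g^\ast(\tau_i(x))/B_1(g^\ast)(x)=\psi(\tau_i(x))h(\tau_i(x))/(\rho\, h(x))=p_i(x)$, which is exactly the defining relation of an optimal function. Theorem~\ref{entropy inequality ha hv}(2) then yields $h_v(\hat\mu)=\int_X\ln\big(B_1(g^\ast)/g^\ast\big)\,d\mu=\int_X(\ln\rho-\ln\psi)\,d\mu=\ln\rho-\int_X\ln\psi\,d\mu$. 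Adding $\int_X\ln\psi\,d\mu$ gives $h_v(\hat\mu)+\int_X\ln\psi\,d\mu=\ln\rho$, so by Lemma~\ref{basic pressure inequality} we obtain $P(\psi)\ge\ln\rho$; combined with the upper bound this forces $P(\psi)=\ln\rho$ and shows that $\hat\mu$ attains the supremum, that is, $\hat\mu$ is an equilibrium state for $\psi$.

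I expect the main obstacle to be the bookkeeping that verifies $g^\ast=\psi h$ is genuinely optimal: one must carefully track how the normalization in Corollary~\ref{normalization} interacts with the \emph{unweighted} operator $B_1$ appearing in the definition of optimality, and confirm that the holonomic lifting $\hat\mu$ really has conditional probabilities $\mu_x(i)=p_i(x)$ so that Theorem~\ref{entropy inequality ha hv}(2) applies. Everything else is the clean double-bound argument above, with the eigenfunction $h$ playing a dual role: directly as the test function for the upper bound, and through $\psi h$ as the optimal function forcing equality in the entropy comparison for the lower bound.
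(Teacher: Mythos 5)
Your proposal is correct and follows essentially the same route as the paper's proof: the upper bound $P(\psi)\leq\ln\rho(B_{\psi*\tau})$ by plugging the eigenfunction into the inner infimum of Definition~\ref{Pressure}, the lower bound via the normalized system $\mathcal{R}_p$ and the holonomic lifting $\hat\mu$, and the limit formula via Lemma~\ref{powers description}. Your one refinement is welcome: by explicitly exhibiting $g^{*}=\psi h$ as an optimal function and invoking Theorem~\ref{entropy inequality ha hv}(2), you supply precisely the justification that the paper leaves implicit when it asserts $h_a(\hat{\mu})=h_v(\hat{\mu})=\ln\rho(B_{\psi*\tau})-\int_X\ln\psi\,d\mu$ (the paper reaches this through the Gibbs inequality of Lemma~\ref{entropy inequality ln} applied to the weights $p_j$, which is the same mechanism in disguise).
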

\medskip

\begin{proof}
	From item $\mathit{2}$ of Theorem~\ref{entropy inequality ha hv}
	follows that $h_a(\hat{\nu}) \leq h_v(\hat{\nu})$.
	From Lemma \ref{entropy inequality ln} and Corollary \ref{normalization}
	we get the following inequality
	\begin{align*}
	h_a(\hat{\nu})
	&=
	- \int_{X} \sum_{j=0}^{n-1} (\psi\circ\tau_j) \ln q_j\, d\nu
	\leq
	- \int_{X} \sum_{j=0}^{n-1} (\psi\circ\tau_j) \ln p_j\, d\nu
	\\
	&=
	\ln \rho(B_{ {\psi*\tau}}) - \int_{X} \ln \psi \, d\nu,
	\end{align*}
	with equality being attained if $\hat{\nu}=\hat{\mu}$, where
	$\hat{\mu}$ is the holonomic lifting of the fixed point $\mathcal{L}_{ {p}}(\mu)=\mu$.
	So we have
	\[
	h_a(\hat{\mu}) =h_v(\hat{\mu})= \ln \rho(B_{ {\psi*\tau}}) - \int_{X} \ln \psi  d\mu.
	\]
	From Lemma~\ref{basic pressure inequality} we have
	$
	\sup\{ 	h_{v}(\hat{\nu})+  \int_{X} \ln \psi\,  d\nu:
			\hat{\nu} \in \mathcal{H}(\mathcal{R})
		\}
	=
	P(\psi)
	$, so for every $\hat{\nu} \in \mathcal{H}(\mathcal{R})$ we have
	\[
	P(\psi) \geq h_{v}(\hat{\nu}) +  \int_{X} \ln \psi\,  d\nu
	\geq
	h_{a}(\hat{\nu})+ \int_{X} \ln \psi\,  d\nu.
	\]
	On the other hand, the identity
	$h_a(\hat{\mu}) =h_v(\hat{\mu})=  \ln \rho(B_{ {\psi*\tau}}) -\int_{X} \ln \psi\,  d\mu$
	is equivalent to both
	$h_{a}(\hat{\mu})+  \int_{X} \ln \psi\,  d\mu = \ln \rho(B_{ {\psi*\tau}})$
	and
	$h_{v}(\hat{\mu})+ \int_{X} \ln \psi  d\mu = \ln \rho(B_{ {\psi*\tau}})$.
	Thus showing that $P(\psi)\geq \ln \rho(B_{ {\psi*\tau}})$.
	Recall that the pressure is defined by
	\[
	P(\psi)
	\equiv
	\sup_{\hat{\nu} \in \mathcal{H}(\mathcal{R})}
	\
	\inf_{ \substack{ g\in C(X, \mathbb{R}) \\ g>0  } }
	\left\{ \int_{X} \ln \frac{B_{  {\psi*\tau}   }(g)}{ g }  d\nu \right\}.
	\]
    The eigenfunction $h_{ {\psi*\tau}}$ is positive and continuous
    so we have for any fixed $\hat{\nu} \in \mathcal{H}(\mathcal{R})$
    the following upper bound
    \[
    \inf_{ \substack{ g\in C(X, \mathbb{R}) \\ g>0  } }
	\left\{ \int_{X} \ln \frac{B_{  {\psi*\tau} }(g)}{ g }  d\nu \right\}
	\leq
	\int_{X} \ln \frac{B_{  {\psi*\tau} }(h_{ {\psi*\tau}})}{h_{ {\psi*\tau}} }  d\nu
	=\ln \rho(B_{ {\psi*\tau}}).
	\]
    By taking the supremum over $\mathcal{H}(\mathcal{R})$, on both sides of the last
    inequality, we get that
    $P(\psi)\leq \ln \rho(B_{ {\psi*\tau}})$. Since we already shown the reverse inequality
    the first equality in \eqref{2formulas-pressao} is proved. The second equality claimed in
    the theorem statement is now a straightforward application
    of Lemma \ref{powers description}.
\end{proof}

\section*{Acknowledgments}
We would like to express our thanks to the organizers of the IV EBED, 
where this work began. We also thank Krzysztof Le\'sniak and Edgar Matias 
for their helpful comments on the early version of this manuscript.  
Leandro Cioletti thanks CNPq for the financial support.

\end{document}